\documentclass[leqno]{amsart}
\usepackage{amsmath}
\usepackage{amssymb}
\usepackage{amsthm}
\usepackage{enumerate}
\usepackage[mathscr]{eucal}
\theoremstyle{plain}
\usepackage{tikz}
\newtheorem{theorem}{Theorem}[section]
\newtheorem{lemma}[theorem]{Lemma}
\newtheorem{prop}[theorem]{Proposition}
\theoremstyle{definition}
\newtheorem{definition}[theorem]{Definition}
\newtheorem{remark}[theorem]{Remark}

\newtheorem{example}[theorem]{Example}

\newtheorem{cor}[theorem]{Corollary}
\theoremstyle{remark}




\begin{document}
\title[On Isosceles orthogonality and some geometric constants] {On isosceles orthogonality and some geometric constants in a normed space}
\author[Sain,  Ghosh and  Paul  ]{Debmalya Sain, Souvik Ghosh and Kallol Paul }

\newcommand{\acr}{\newline\indent}

\address[Sain]{Departmento de Analisis Matematico\\ Universidad de Granada\\ Spain. }
\email{saindebmalya@gmail.com}

\address[Ghosh]{Department of Mathematics\\ Jadavpur University\\ Kolkata 700032\\ West Bengal\\ INDIA}
\email{sghosh0019@gmail.com}

\address[Paul]{Department of Mathematics\\ Jadavpur University\\ Kolkata 700032\\ West Bengal\\ INDIA}
\email{kalloldada@gmail.com}

\thanks{The research of Dr. Debmalya Sain is sponsored by a Maria Zambrano postdoctoral grant under the mentorship of Prof Miguel Martin. The second author would like to thank  CSIR, Govt. of India, for the financial support in the form of Junior Research Fellowship under the mentorship of Prof. Kallol Paul.} 

\subjclass[2010]{Primary 46B20, Secondary 52A21}
\keywords{normed spaces; isosceles orthogonality; James constant; the monotonicity lemma.}

\maketitle

\begin{abstract}
  We study the James constant $J(\mathbb{X})$, an important geometric quantity associated with a normed space $ \mathbb{X} $, and explore its connection with isosceles orthogonality $ \perp_I. $ The James constant is defined as $J(\mathbb{X}) := \sup\{\min \{\|x+y\|, \|x-y\|\}: x, y \in \mathbb{X},~ \|x\|=\|y\|=1 \}.$ We prove that if $J(\mathbb{X})$ is attained for unit vectors $x, y \in \mathbb{X},$ then $x\perp_I y.$ We also show that if $\mathbb{X}$ is a two-dimensional polyhedral Banach space then $J(\mathbb{X})$ is always attained at an extreme point $z$ of the unit ball of $\mathbb{X},$ so that $J(\mathbb{X}) = \|z+y\| = \|z-y\|,$ where $ \| y \| = 1 $ and $z\perp_I y.$ This helps us to explicitly compute the James constant of a two-dimensional polyhedral Banach space in an efficient way. We further study some related problems with reference to several other geometric constants in a normed space.
\end{abstract}

\section{Introduction}

There are various geometric constants associated with a normed space, which are useful  towards a quantitative understanding of the  geometry of the space and also play an important role in the study of  some other related problems of functional analysis. The James constant is one of the most prominent geometric constants associated with the space, which measures the ``non-squareness'' of the unit ball of a normed space. Our motivation behind this article is to illustrate the central role played by isosceles orthogonality, a natural generalization of the usual orthogonality in an inner product space, in studying various geometric constants, including the James constant. Before proceeding further, let us fix the notations and the terminologies.\\

 Let  $\mathbb{X}, \mathbb{Y}$ denote  real normed spaces. Let $B_\mathbb{X} = \{ x \in \mathbb{X} : \|x\|\leq 1\} $ and $S_\mathbb{X} = \{x\in \mathbb{X}: \|x\|=1\}$ denote  the unit ball and the unit sphere of $\mathbb{X},$ respectively.  For a non-empty convex subset $S$ of $\mathbb{X},$ an element $z \in S$  is said to be an extreme point of $S$ if $ z = (1-t)x + ty $ for some $ t \in (0,1) $ and $x,y \in S$ implies that $ x=y=z.$ The set of all extreme points of $B_\mathbb{X}$ is denoted by $E_{\mathbb{X}}.$  A normed space $\mathbb{X}$ is said to be  strictly convex if $ E_{\mathbb{X}} = S_{\mathbb{X}}.$
  An element  $x \in \mathbb{X}$ is said to be isosceles orthogonal \cite{J}  to an element $y \in \mathbb{X} $, denoted as $x\perp_I y$,  if 
  $ \|x + y\|= \|x - y\|.$ Geometrically it means that the length of the two diagonal vectors $ \|x +y\|$ and $ \|x -y\|$ of the parallelogram formed by two vectors  $x$ and $y$  are equal. We refer the readers to \cite{A, AMW, JLW} for more information related to this topic. An element $x \in \mathbb{X}$ is said to be approximate isosceles orthogonal \cite{CW} to $y$   if for $\epsilon \in [0, 1), $ $|\|x+y\|^2 - \|x-y\|^2| \leq 4\epsilon \|x\|\|y\|,$ and is written as $ x \perp_I^{\epsilon} y.$ Note that approximate isosceles orthogonality is symmetric, and therefore, so is exact isosceles orthogonality. 
  
  \smallskip
  
  We now mention the definitions of the following geometric constants, to be studied throughout this paper.
  \begin{definition} \cite{GL}
  	Let $\mathbb{X}$ be a normed space. 
  	\begin{enumerate}[(i)]
  		\item The James constant, denoted by $J(\mathbb{X}), $ is defined as 
  		\[ J(\mathbb{X}) = \sup\Big \{\min \big \{\|x+y\|, \|x-y\| \big \} : x, y \in S_\mathbb{X} \Big\}.	\]
  		\item For $ x \in S_\mathbb{X},$ the local James constant, denoted by $\beta(x),$ is defined as 
  		\[ \beta(x) = \sup \Big \{\min \big \{\|x+y\|, \|x-y\| \big \} : y\in S_\mathbb{X}\Big\}.\]  
  		
  		\item The Sch\"affer constant, denoted by $ S(\mathbb{X}), $ is defined as 
  		\[S(\mathbb{X}) =  \inf \Big \{\max \big \{\|x+y\|, \|x-y\| \big \} : x, y\in S_\mathbb{X} \Big \}.\]
  		
  		\item For each $ x \in S_\mathbb{X},$ the local Sch\"affer constant at $ x $, denoted by $ \alpha(\mathbb{X}), $ is defined as 
  		 \[ \alpha(x) =  \inf \Big \{\max \big \{\|x+y\|, \|x-y\| \big \} : y \in S_\mathbb{X}\Big \}.\] 
  \end{enumerate}
  	\end{definition}
   	 We note from \cite{GL} that for a given normed space $\mathbb{X},$ $\sqrt{2} \leq J(\mathbb{X}) \leq 2.$ Moreover, $ \mathbb{X} $ is said to be uniformly non-square if and only if $ J(\mathbb{X}) < 2. $ It is also known that $ J(\mathbb{X}) = \sqrt{2} $ whenever $\mathbb{X}$ is an inner product space but the converse is not true, in general. In \cite{KST}, the authors studied the normed spaces with James constant $\sqrt{2}.$
   	 
  Generalizations of the notions of the James constant and the local James constant, were introduced in \cite{LSL} in the following way.
 For $ \lambda \in (0, 1),$ the generalized James constant, denoted by $J(\lambda, \mathbb{X}),$ is defined as 
  \[ J(\lambda, \mathbb{X}) = \sup \Big \{\min \big \{\|\lambda x + (1-\lambda) y\|, \| \lambda x - (1-\lambda) y\| \big \} : x, y \in S_{\mathbb{X}} \Big\}\]  and for  $x\in S_\mathbb{X},$ the generalized local James constant, denoted by $ \beta(\lambda, x), $ is defined as 
 \[\beta(\lambda, x) = \sup \Big \{\min \big \{\|\lambda x+(1-\lambda)y\|, \|\lambda x - (1-\lambda)y\|\big \} :  y \in S_\mathbb{X} \Big \}.\]
 We also need two other well-known geometric constants,  modulus of smoothness and modulus of convexity, which are denoted by $ \rho_{\mathbb{X}}(\epsilon) $ and $ \delta_{\mathbb{X}}(\epsilon), $ respectively, and are defined as 
  \[\rho_{\mathbb{X}}(\epsilon) = \sup \Big\{ 1 - \frac{\|x + y\|}{2} : x,y \in S_{\mathbb{X}}, \|x - y \| \leq \epsilon\Big \},\] 
 \[ \delta_{\mathbb{X}}(\epsilon) = \inf \Big\{ 1 - \frac{\|x + y\|}{2} : x,y \in S_{\mathbb{X}}, \|x - y \| \geq \epsilon\Big \},\]  
 where $ \epsilon \in [0,2].$  We note from \cite[Cor. 5]{L} that $  \delta_{\mathbb{X}}$ is a continuous function on $[0,2)$ whereas from \cite{WY}, $\rho_{\mathbb{X}}$ is continuous  on $[0, 2].$ The modulus of smoothness is also defined as:
 	\[ \rho'_{{\mathbb{X}}}(\epsilon) = \sup_{x, y\in S_\mathbb{X}}\Big\{\frac{\|x + \epsilon y\| + \|x - \epsilon y\|}{2}-1\Big\}.\]
 	or (equivalently)
 	\[  \rho'_{\mathbb{X}}(\epsilon) = \Big\{\frac{\|x + y\| + \|x - y\|}{2}-1 : \|x\| =1, \|y\| \leq \epsilon \Big\}.\]
  Observe that $\rho'_{{\mathbb{X}}}(\epsilon)$  is not equivalent to $\rho_{\mathbb{X}}(\epsilon)$ ( see \cite[Th. 1]{B} ). 
  
  \smallskip
  
   Given any $x, y \in \mathbb{X},$ we denote by $[x, y\rangle$ the ray passing through $y$ and starting from $x,$ i.e., $ [x, y\rangle = \{ (1-t)x + ty : t \geq 0\}$ and $[x, y]$ denotes the closed convex line segment between $x$ and $y,$ i.e., $[x, y] = \{(1-t) x + ty : 0\leq t \leq 1\}.$ Another important concept to be used in this paper is that of orientation. Following \cite{BFS},  we say that $x$ precedes $y$ in a two-dimensional Banach space $ \mathbb{X}$, if  
  $x_1y_2 - x_2y_1 > 0,$ where $x = (x_1, y_1), y=(y_1, y_2) \in \mathbb{X}$ and  in this case we write that $ x \prec y.$ Of course, here $ \mathbb{X} $ is identified with $ \mathbb{R}^2 $ in the obvious way. We note from  \cite[Cor.2.4]{JLW} that for any $x \in S_\mathbb{X}$ there exists  a unique (except for the sign) $y \in S_\mathbb{X}$ such that $x \perp_I y.$ In particular, whenever it is given that $ x \perp_I y,$ without loss of generality we can assume that $ -y \prec x \prec y.$ 
   We also consider the attainment set $ M_{J(\mathbb{X})} $ of the James constant: 
   \[M_{J(\mathbb{X})} = \{(x, y) \in S_\mathbb{X} \times S_\mathbb{X} : \min\{\|x+y\|, \|x-y\|\} = J(\mathbb{X})\}.
   \]
 When $\mathbb{X}$ is finite-dimensional it is easy to see that $M_{J(\mathbb{X})} \neq \emptyset.$ 
 
 \smallskip
 We explore the attainment problem for the generalized James constant and also study its converse. We illustrate the crucial role played by isosceles orthogonality in the whole scheme of things. In two-dimensional polyhedral Banach spaces, we make an observation which is computationally effective for finding the values of the James constant in each case. We also study approximate isosceles orthogonality from a geometric point of view and discuss its connections with the modulus of convexity.\\

 We end this section by mentioning  the following  known results, which are essential in our present work.
 \begin{lemma}\label{monotonicity}\cite[Prop. 31]{MSW}
 Let $\mathbb{X}$ be a two-dimensional Banach space. Let $x, y, z \not = \theta,$ $x\not= z,$ with $[0, y\rangle$ lying in between $[0, x\rangle$ and $[0, z\rangle,$ and suppose that $\|y\| = \|z\|.$ Then $\|x-y\| \leq \|x-z\|.$ In particular, if $\mathbb{X}$ is strictly convex, then we always have strict inequality. 
 \end{lemma}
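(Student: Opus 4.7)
By the homogeneity of the norm, rescale so that $\|y\|=\|z\|=1$. First dispatch the degenerate cases where the ray $[0,y\rangle$ coincides with $[0,x\rangle$ (so $y=x/\|x\|$) or with $[0,z\rangle$ (so $y=z$); in either case $\|x-y\|\le\|x-z\|$ follows from the triangle inequality $\|x-z\|\ge \bigl|\|x\|-\|z\|\bigr|$. In the remaining, strict, case the angular hypothesis guarantees that the segment $[x,z]$ meets the ray $[0,y\rangle$ at a unique point $w=\alpha y=(1-t)x+tz$ with $\alpha>0$ and $t\in(0,1)$. Rearranging yields the identity
\[
(1-t)(x-y)+t(z-y)=(\alpha-1)\,y
\]
that will drive the argument.

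The approach is dual. I look for a norm-one functional $f\in\mathbb{X}^*$ which is a supporting functional at $x-y$, i.e.\ $f(x-y)=\|x-y\|$, with the additional property $f(y)\ge f(z)$. Once such an $f$ is in hand, the computation
\[
\|x-y\| \;=\; f(x-y) \;=\; f(x-z)+f(z-y) \;\le\; f(x-z) \;\le\; \|x-z\|
\]
closes the estimate, since $f(z-y)\le 0$ by the choice of $f$ and $f(x-z)\le \|f\|\cdot\|x-z\|=\|x-z\|$.

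The principal obstacle is the existence of such an $f$. The set of norm-one supporting functionals at $x-y$ is a nonempty face of the dual unit sphere $S_{\mathbb{X}^*}$, and the task is to show that this face meets the closed half-space $\{f\in\mathbb{X}^*:f(y)\ge f(z)\}$. This is precisely where the equal-norm condition $\|y\|=\|z\|=1$ and the strict angular positioning of $y$ between $x$ and $z$ combine: since $y$ and $z$ both lie on $S_\mathbb{X}$ (a simple closed convex curve in the two-dimensional setting) and $y$ is angularly closer to $x$, the outward-normal directions to $B_\mathbb{X}$ at $(x-y)/\|x-y\|$ can be chosen compatibly with the angular order of $y$ and $z$. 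A two-dimensional geometric argument, combined with Hahn--Banach, produces the required functional.

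For the strict convexity clause, when $E_{\mathbb{X}}=S_{\mathbb{X}}$, each nonzero vector admits a unique supporting functional; the strict angular positioning forces $y\ne z$ and the extremal uniqueness forces $f(z)<f(y)$ strictly, yielding the strict inequality $\|x-y\|<\|x-z\|$. The delicate part of the proof is thus encapsulated entirely in the existence of the compatible supporting functional $f$.
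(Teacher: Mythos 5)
First, a point of comparison: the paper offers no proof of this lemma at all --- it is quoted verbatim from \cite[Prop.~31]{MSW} --- so your attempt can only be measured against the standard argument in that source.

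Your proposal has a genuine gap at exactly the point you yourself flag as ``the principal obstacle'': the existence of a norm-one functional $f$ with $f(x-y)=\|x-y\|$ and $f(y)\ge f(z)$. You assert that ``a two-dimensional geometric argument, combined with Hahn--Banach, produces the required functional,'' but no such argument is supplied, and this existence claim carries essentially the entire content of the lemma (as your own one-line deduction from it shows). It is not a routine application of Hahn--Banach: the set of norming functionals at $x-y$ is a face of $S_{\mathbb{X}^*}$ that may be a single point, and showing that it meets the half-space $\{f: f(y-z)\ge 0\}$ requires using both the equal-norm hypothesis and the betweenness hypothesis in a way you never make explicit; an attempt to verify it via your identity $(1-t)(x-y)+t(z-y)=(\alpha-1)y$ runs into cases (e.g.\ $\alpha>1$ with $f(y)>0$) that do not close without further input. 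The irony is that the intersection point $w=\alpha y=(1-t)x+tz$ you introduce is precisely what powers the standard, purely primal proof, which needs no duality: $\|x-y\|\le\|x-w\|+\|w-y\|$, while $\|w-y\|=|\alpha-1|\,\|y\|=\bigl|\,\|w\|-\|z\|\,\bigr|\le\|w-z\|$ by the reverse triangle inequality (this is where $\|y\|=\|z\|$ enters), whence $\|x-y\|\le\|x-w\|+\|w-z\|=\|x-z\|$ because $w\in[x,z]$. Tracing the equality cases of this chain also delivers the strict-convexity clause.

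Second, your treatment of the strict inequality rests on a false premise: ``each nonzero vector admits a unique supporting functional'' is smoothness of $\mathbb{X}$ (equivalently, strict convexity of $\mathbb{X}^*$), not strict convexity of $\mathbb{X}$; in dimension two these are independent properties. Moreover, even granting uniqueness of $f$, you give no reason why $f(z)<f(y)$ must hold strictly rather than $f(z)=f(y)$; in the latter case your chain degenerates to $\|x-y\|=f(x-z)\le\|x-z\|$ and strictness would have to come from $f(x-z)<\|x-z\|$, which you do not address. As written, neither the weak nor the strict inequality is established.
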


 \begin{lemma}\label{lemma}\cite[Lemma 2.2]{GL}
 		Let $\mathbb{X}$ be a two-dimensional Banach space and let $x \in S_\mathbb{X}.$ Then there exists a unique $y \in S_\mathbb{X}$ such that $\alpha(x) = \beta(x) = \|x+y\| = \|x-y\|.$
 	\end{lemma}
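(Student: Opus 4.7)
The plan is to combine the uniqueness of the isosceles orthogonal direction (cited from \cite[Cor.\ 2.4]{JLW}) with the monotonicity lemma (Lemma \ref{monotonicity}) to track the behaviour of $\|x+y\|$ and $\|x-y\|$ as $y$ traverses $S_\mathbb{X}$. The uniqueness assertion in the lemma will be essentially free, so the main content is to identify the isosceles-orthogonal direction as the simultaneous maximizer of $\min\{\|x+y\|,\|x-y\|\}$ and minimizer of $\max\{\|x+y\|,\|x-y\|\}$.

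First, I would fix $x\in S_\mathbb{X}$ and invoke \cite[Cor.\ 2.4]{JLW} to obtain a vector $y^{*}\in S_\mathbb{X}$, unique up to sign, with $x\perp_I y^{*}$, i.e.\ $\|x+y^{*}\|=\|x-y^{*}\|$. By the convention recalled in the excerpt, I may assume $-y^{*}\prec x\prec y^{*}$. Next, I would parameterize one half of $S_\mathbb{X}$ by a continuous path $\gamma\colon[0,\pi]\to S_\mathbb{X}$ starting at $\gamma(0)=x$ and ending at $\gamma(\pi)=-x$, arranged so that $\gamma(t)$ sweeps across the oriented half containing $y^{*}$ monotonically (in the ordering $\prec$).

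Now apply the monotonicity lemma twice. For $0\leq s<t\leq \pi$, the ray $[0,\gamma(s)\rangle$ lies between $[0,x\rangle$ and $[0,\gamma(t)\rangle$ and $\|\gamma(s)\|=\|\gamma(t)\|=1$, so Lemma \ref{monotonicity} gives $\|x-\gamma(s)\|\leq\|x-\gamma(t)\|$; hence $t\mapsto\|x-\gamma(t)\|$ is non-decreasing from $0$ to $2$. Writing $\|x+\gamma(t)\|=\|x-(-\gamma(t))\|$ and repeating the argument with $-\gamma(t)$ sweeping in the opposite sense, the map $t\mapsto\|x+\gamma(t)\|$ is non-increasing from $2$ to $0$. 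By continuity and the intermediate value theorem the difference $\|x-\gamma(t)\|-\|x+\gamma(t)\|$ vanishes at some $t^{*}$, and $\gamma(t^{*})$ must coincide with the isosceles-orthogonal $y^{*}$ by the quoted uniqueness.

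Finally, I would read off the two variational characterizations. For $t\leq t^{*}$ we have $\|x-\gamma(t)\|\leq\|x+\gamma(t)\|$, so $\min\{\|x+\gamma(t)\|,\|x-\gamma(t)\|\}=\|x-\gamma(t)\|$ is non-decreasing, while for $t\geq t^{*}$ the minimum equals $\|x+\gamma(t)\|$ and is non-increasing; thus the minimum is maximized at $t^{*}$, giving $\beta(x)=\|x+y^{*}\|=\|x-y^{*}\|$. An identical argument with $\max$ in place of $\min$ yields $\alpha(x)=\|x+y^{*}\|=\|x-y^{*}\|$. Switching to the other half of $S_\mathbb{X}$ only produces $-y^{*}$, which gives the same pair of norms, so $y^{*}$ is unique in the sense intended. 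The one delicate point I expect is that Lemma \ref{monotonicity} only guarantees weak monotonicity outside the strictly convex setting, so the intermediate value argument identifies a possibly non-unique zero of $\|x-\gamma(t)\|-\|x+\gamma(t)\|$; this is precisely where the appeal to \cite[Cor.\ 2.4]{JLW} is essential, pinning the zero down to the single direction $y^{*}$.
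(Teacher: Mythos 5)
Your argument is correct. The paper itself offers no proof of this lemma---it is imported verbatim from \cite[Lemma 2.2]{GL}---but your sweep of the half-sphere via Lemma \ref{monotonicity}, the intermediate value theorem, and the uniqueness (up to sign) of the isosceles-orthogonal direction from \cite{JLW} is exactly the standard argument, in substance the one in Gao--Lau, including the correct observations that the two norm functions are oppositely monotone on each half of $S_\mathbb{X}$ and that uniqueness of $y$ can only be meant up to sign.
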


 \begin{theorem}\label{sup}\cite[Th. 3.3]{GL} 
 Let $\mathbb{X}$ be a normed space. Then \\
  \[ J(\mathbb{X}) = \sup\big\{\epsilon : \epsilon < 2-2\delta_{\mathbb{X}}(\epsilon)\}.\]
 \end{theorem}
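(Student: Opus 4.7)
The plan is to prove the stated equality by establishing the two inequalities $J(\mathbb{X}) \leq A$ and $J(\mathbb{X}) \geq A$ separately, where I write $A := \sup\{\epsilon : \epsilon < 2 - 2\delta_{\mathbb{X}}(\epsilon)\}$. The conceptual bridge is that $\delta_{\mathbb{X}}(\epsilon)$ records the smallest value $1 - \|x+y\|/2$ can take subject to $\|x-y\| \geq \epsilon$, while $J(\mathbb{X})$ packages exactly this same trade-off between $\|x+y\|$ and $\|x-y\|$ symmetrically; replacing $y$ by $-y$ lets us read one off the other.

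For the upper bound $J(\mathbb{X}) \leq A$, I would fix any $\epsilon < J(\mathbb{X})$ and extract from the supremum defining $J(\mathbb{X})$ a pair $x, y \in S_{\mathbb{X}}$ satisfying $\min\{\|x+y\|, \|x-y\|\} > \epsilon$. After swapping $y$ with $-y$ if necessary, I may assume $\|x-y\| \geq \|x+y\|$, so in particular $\|x-y\| \geq \epsilon$. Plugging this admissible pair into the definition of the modulus of convexity yields $\delta_{\mathbb{X}}(\epsilon) \leq 1 - \|x+y\|/2$, i.e.\ $\|x+y\| \leq 2 - 2\delta_{\mathbb{X}}(\epsilon)$. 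Combined with $\|x+y\| > \epsilon$, this gives $\epsilon < 2 - 2\delta_{\mathbb{X}}(\epsilon)$, hence $\epsilon \leq A$; letting $\epsilon \uparrow J(\mathbb{X})$ produces $J(\mathbb{X}) \leq A$.

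For the reverse inequality the key preliminary observation is that $\delta_{\mathbb{X}}$ is nondecreasing, so the function $f(\epsilon) := 2 - 2\delta_{\mathbb{X}}(\epsilon) - \epsilon$ is strictly decreasing on $[0,2]$. Consequently the set $\{\epsilon : f(\epsilon) > 0\}$ is an interval of the form $[0, A)$ (possibly all of $[0,2)$ when $J(\mathbb{X}) = 2$), so every $\epsilon < A$ genuinely satisfies $\epsilon < 2 - 2\delta_{\mathbb{X}}(\epsilon)$, equivalently $\delta_{\mathbb{X}}(\epsilon) < 1 - \epsilon/2$. By the definition of the infimum I can then choose $x, y \in S_{\mathbb{X}}$ with $\|x-y\| \geq \epsilon$ and $1 - \|x+y\|/2 < 1 - \epsilon/2$, so that $\|x+y\| > \epsilon$. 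Thus $\min\{\|x+y\|, \|x-y\|\} \geq \epsilon$, giving $J(\mathbb{X}) \geq \epsilon$; letting $\epsilon \uparrow A$ yields $J(\mathbb{X}) \geq A$.

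The main subtlety, rather than an outright obstacle, is the transition from $\epsilon < A$ to the pointwise inequality $\epsilon < 2 - 2\delta_{\mathbb{X}}(\epsilon)$, which relies essentially on the monotonicity of $\delta_{\mathbb{X}}$; without it the sup defining $A$ could incorporate $\epsilon$ values for which the defining inequality itself fails. A secondary point worth handling carefully is the bookkeeping between strict and non-strict inequalities when moving between the definitions of $J(\mathbb{X})$ (a supremum of a minimum) and $\delta_{\mathbb{X}}(\epsilon)$ (an infimum over the closed constraint $\|x-y\| \geq \epsilon$); any mismatch is absorbed by perturbing $\epsilon$ downward by an arbitrarily small amount before taking the final limit.
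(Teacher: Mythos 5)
The paper states this theorem as a quoted result from Gao--Lau \cite[Th.\ 3.3]{GL} and supplies no proof of its own, so there is nothing internal to compare against. Your argument is correct and self-contained: the upper bound correctly exploits the $y \mapsto -y$ symmetry to make the pair admissible for $\delta_{\mathbb{X}}(\epsilon)$, and the lower bound correctly identifies (and resolves, via the monotonicity of $\delta_{\mathbb{X}}$ and hence the strict decrease of $\epsilon \mapsto 2 - 2\delta_{\mathbb{X}}(\epsilon) - \epsilon$) the only real subtlety, namely that the set $\{\epsilon : \epsilon < 2 - 2\delta_{\mathbb{X}}(\epsilon)\}$ is an initial segment so that every $\epsilon$ below its supremum actually satisfies the defining inequality. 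This is essentially the standard proof of the Gao--Lau identity.
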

 
 \begin{prop}\label{equal} \cite[Prop. 2.8]{GL}
 	Let $\mathbb{X}$ be two-dimensional Banach space. If $S_\mathbb{X}$ is affinely homeomorphic to a convex symmetric body in the two-dimensional Euclidean space $\mathbb{R}^2$  which is invariant under a rotation of $\frac{\pi}{4}$, then $J(\mathbb{X}) = \sqrt{2}.$
 \end{prop}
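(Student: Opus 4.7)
My plan is to exploit the $\pi/4$-rotational symmetry to produce a linear isometry of $\mathbb{X}$ whose square realizes the (unique) isosceles-orthogonal partner of every unit vector, and then to compute the common value $\|x + R^2 x\|$ via a purely Euclidean vector identity.

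First, because both the unit ball $B_\mathbb{X}$ and the target convex body $K\subset\mathbb{R}^2$ are centrally symmetric about the origin, any affine homeomorphism between them must send center to center (the center of symmetry of a centrally symmetric convex body is unique). Hence the given affine homeomorphism may be taken as a linear bijection $T:\mathbb{X}\to\mathbb{R}^2$ with $T(B_\mathbb{X})=K$. Define $R:=T^{-1}R_{\pi/4}T$, where $R_{\pi/4}$ is Euclidean rotation by $\pi/4$. Since $R_{\pi/4}$ preserves $K$ by hypothesis, $R$ preserves $B_\mathbb{X}$, so $R$ is a linear isometry of $\mathbb{X}$; moreover $R^4 = T^{-1}R_{\pi}T = -I_\mathbb{X}$.

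Next, for an arbitrary $x\in S_\mathbb{X}$ I would check that $x\perp_I R^2 x$. Applying the isometry $R^2$ to the vector $x-R^2 x$ gives $R^2 x - R^4 x = R^2 x + x$, so $\|x-R^2 x\| = \|x+R^2 x\|$. By the uniqueness (up to sign) of the isosceles-orthogonal partner in two dimensions \cite[Cor.~2.4]{JLW}, $\pm R^2 x$ is the \emph{only} unit vector isosceles orthogonal to $x$. To evaluate the common norm, I use the elementary identity $v + R_{\pi/2} v = \sqrt{2}\, R_{\pi/4} v$ in $\mathbb{R}^2$ (immediate on writing $v=(a,b)$), which transfers through the linear map $T$ to the vector identity $x + R^2 x = \sqrt{2}\, R x$ in $\mathbb{X}$. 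Since $R$ is an isometry and $\|x\|=1$, we obtain $\|x+R^2 x\| = \sqrt{2}$.

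To finish, I invoke Lemma~\ref{lemma}: for each $x\in S_\mathbb{X}$, $\beta(x)$ equals $\|x+y\|$ for the unique $y\in S_\mathbb{X}$ with $\|x+y\|=\|x-y\|$. By the previous step this $y$ is $\pm R^2 x$, so $\beta(x) = \sqrt{2}$ for every $x\in S_\mathbb{X}$. Unpacking the definitions, $J(\mathbb{X}) = \sup_{x\in S_\mathbb{X}} \beta(x) = \sqrt{2}$, as desired.

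The only genuinely delicate step is the opening one: justifying that the affine homeomorphism can be taken linear and thereby produces a bona fide isometry $R$ of $\mathbb{X}$ with $R^4=-I$. Once $R$ is in place, the rest is essentially forced — the Euclidean vector identity supplies the value $\sqrt{2}$ for free, and Lemma~\ref{lemma} combined with the uniqueness of the isosceles-orthogonal partner packages this value as $J(\mathbb{X})$.
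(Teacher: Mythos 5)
The paper does not prove this proposition; it is imported verbatim from \cite{GL} (Prop.\ 2.8), so there is no internal argument to compare against. Your proof is correct and self-contained. The linearization step you flag as delicate is fine: an affine bijection carrying the origin-symmetric body $B_\mathbb{X}$ onto a convex body invariant under a rotation about the origin must send the (unique) center of symmetry to the origin, so it can be taken linear, and a linear bijection preserving the unit ball is an isometry; hence $R=T^{-1}R_{\pi/4}T$ is an isometry with $R^4=-I$. The computation $\|x-R^2x\|=\|R^2(x-R^2x)\|=\|x+R^2x\|$ correctly exhibits $R^2x$ as the isosceles-orthogonal partner of $x$, and the identity $v+R_{\pi/2}v=\sqrt{2}\,R_{\pi/4}v$ transfers through $T$ to give $\|x+R^2x\|=\sqrt{2}\,\|Rx\|=\sqrt{2}$. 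The only point worth making explicit in the final step is that Lemma \ref{lemma} guarantees $\beta(x)$ is attained at \emph{some} $y$ with $\|x+y\|=\|x-y\|$, and the uniqueness result of \cite{JLW} quoted in the paper (Theorem \ref{uniqueness}) forces that $y$ to be $\pm R^2x$; you do invoke both, so the chain $\beta(x)=\sqrt{2}$ for all $x$ and $J(\mathbb{X})=\sup_{x\in S_\mathbb{X}}\beta(x)=\sqrt{2}$ is complete. This is very likely the same mechanism as the original Gao--Lau argument (the $\pi/4$-rotation simultaneously produces the orthogonal partner and the value $\sqrt{2}$), but as presented here it is a clean, verifiable proof using only tools already quoted in the paper.
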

 
 \begin{theorem}\label{uniqueness}\cite[Th. 2.3]{JLW}
 Let $\mathbb{X}$ be a two-dimensional Banach space and let $x \in \mathbb{X}$ be non-zero. Then for each number $0 \leq r \leq \|x\|,$ there exists a unique $y \in \in rS_{\mathbb{X}}$ such that $x \perp_I y.$ \\
 	 Moreover, if $\mathbb{X}$ is strictly convex then for each $r \in [0, +\infty),$ there exists a unique $y \in rS_{\mathbb{X}}$ such that $x \perp_I y.$
 \end{theorem}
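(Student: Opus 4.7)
The plan is to establish existence via an Intermediate Value Theorem argument on the compact connected curve $rS_\mathbb{X}$, and uniqueness (up to the unavoidable sign ambiguity $y\leftrightarrow -y$) by combining Lemma \ref{monotonicity} with a geometric rigidity argument. The strictly convex case is clean, while the general case with $r\le\|x\|$ requires extra care; the trivial case $r=0$ (which forces $y=0$) is disposed of at the outset.

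For existence, set $\hat{x}=x/\|x\|$ and consider the continuous antisymmetric function $\phi:rS_\mathbb{X}\to\mathbb{R}$ given by $\phi(y)=\|x+y\|-\|x-y\|$. A direct computation gives $\phi(r\hat{x})=(\|x\|+r)-\bigl|\|x\|-r\bigr|>0$ and $\phi(-r\hat{x})=-\phi(r\hat{x})<0$. Since $rS_\mathbb{X}$ is the connected boundary of a two-dimensional convex body, the Intermediate Value Theorem produces a $y\in rS_\mathbb{X}$ with $\phi(y)=0$, that is, $x\perp_I y$.

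For uniqueness, I would parametrize a semicircular arc of $rS_\mathbb{X}$ from $-r\hat{x}$ to $r\hat{x}$. Two applications of Lemma \ref{monotonicity}, one with pivot $-x$ and one with pivot $x$, show that $F(y):=\|x+y\|$ is non-decreasing and $G(y):=\|x-y\|$ is non-increasing along this arc; hence $\phi=F-G$ is non-decreasing and its zero set is a closed sub-arc $A$. If $\mathbb{X}$ is strictly convex, the strict version of Lemma \ref{monotonicity} promotes this to strict monotonicity of $\phi$, forcing $A$ to be a single point; the same on the other semicircle gives the full zero set $\{y,-y\}$ and the ``Moreover'' clause for every $r\ge 0$.

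The hard part, and main obstacle, is the general case with $r\le\|x\|$. If $A$ were non-trivial then the opposing monotonicities of $F$ and $G$ together with $F\equiv G$ on $A$ force both to be constant, say equal to $c$; hence the translates $A+x$ and $A-x$ both lie in $cS_\mathbb{X}$ while being separated by the vector $2x$, and $A$ itself lies in $rS_\mathbb{X}$. The plan is first to note that $c\ge\|x\|$ (since $2x$ is then a chord of $cS_\mathbb{X}$, whose Banach diameter is $2c$), and then to exploit that two distinct norm spheres in a two-dimensional Banach space can share a non-degenerate arc only along a common straight segment; a chord-length bookkeeping on the flat face of $cS_\mathbb{X}$ that has to accommodate both $A+x$ and $A-x$ separated by $2x$ would then force $2\|x\|\le 2r$, contradicting $r<\|x\|$, with the boundary case $r=\|x\|$ handled by an additional degeneracy argument.
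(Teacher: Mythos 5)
First, a point of comparison: the paper does not prove this statement at all --- Theorem \ref{uniqueness} is imported verbatim from \cite[Th. 2.3]{JLW} --- so there is no internal argument to measure your proposal against, and I can only judge it on its own terms. Your existence argument (the odd continuous function $\phi(y)=\|x+y\|-\|x-y\|$ on the connected curve $rS_\mathbb{X}$, positive at $r\hat{x}$, negative at $-r\hat{x}$) is correct, as is the strictly convex case, where the strict form of Lemma \ref{monotonicity} with pivots $\pm x$ makes $\phi$ strictly monotone on each semicircular arc. (``Unique'' must of course be read as ``unique up to sign,'' as you do; this matches the paper's own later use of the result.)

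The genuine gap is exactly where you locate it, namely $0<r\le\|x\|$ without strict convexity, and your proposed bridge does not hold. The reduction to ``$F$ and $G$ both constant $=c$ on the zero arc $A$'' and the bound $c\ge\|x\|$ are fine. But the pivotal rigidity claim --- that two distinct norm spheres in a two-dimensional space can share a non-degenerate arc only along a common straight segment --- is false. In $\ell_1^2$ the spheres $S_{\ell_1^2}$ and $(-1,0)+2S_{\ell_1^2}$ share the non-straight arc $\{(1-s,\pm s):0\le s\le\delta\}$ around the vertex $(1,0)$, since $\|(1-s,\pm s)\|_1=1$ and $\|(2-s,\pm s)\|_1=2$. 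So you cannot conclude that $A$ lies on a flat face, the subsequent ``chord-length bookkeeping'' never gets started, and the delicate boundary case (which turns out to be $c=\|x\|$ rather than $r=\|x\|$) is only alluded to. What does work is to use all three containments $A\subset rS_\mathbb{X}$ and $A\pm x\subset cS_\mathbb{X}$ simultaneously: each $y\in A$ is the midpoint of the full chord $[y-x,y+x]$ of $cB_\mathbb{X}$ in the direction of $x$, and these parallel chords all have length $2\|x\|$. Since the length of parallel chords of a planar convex body is a concave function of the transversal level, attaining any value strictly below its maximum $2c$ at most twice, either $c=\|x\|$ (a degenerate case needing separate treatment) or all of $A$ sits at a single level, i.e.\ $A$ is a segment parallel to $x$; only then does an argument with the supporting functional of the face of $cB_\mathbb{X}$ containing $A+x$ (which kills $x$ and hence forces $c\le r$, whence $c=r=\|x\|$ and a face longer than the diameter $2c$) yield the contradiction. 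As written, your argument does not establish uniqueness for $r\le\|x\|$, which is the substance of the theorem.
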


 \section{Main Results}

 In \cite{GL}, Gao and Lau proved that in a two-dimensional Banach space $\mathbb{X}$  if $x, y \in S_{\mathbb{X}}$ are such that $x \perp_{I} y,$ then $\beta(x) = \beta(y) = \|x - y\| = \|x + y \|.$ We begin with a proposition by establishing a similar result in the case of the generalized local James constant $\beta(\lambda, x),$ from which the above result follows directly as a particular case ($\lambda = \frac{1}{2}$).

 \begin{prop}\label{gen}
 Let $ \mathbb{X}$ be a two-dimensional Banach space and  $ x, y \in S_{\mathbb{X}}.$ If $x \perp_{I} (\frac{1-\lambda}{\lambda}) y,$ where $ \lambda \in (0, 1),$ then $ \beta(\lambda, x) = \|\lambda x + (1 - \lambda)y\| =  \|\lambda x - (1 - \lambda)y\|. $ 
 
 \end{prop}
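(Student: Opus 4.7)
The plan is to separate the proposition into an easy half and a hard half. The easy half is the second equality: unwinding the definition of isosceles orthogonality, the hypothesis $x \perp_I \frac{1-\lambda}{\lambda} y$ reads $\|x + \frac{1-\lambda}{\lambda} y\| = \|x - \frac{1-\lambda}{\lambda} y\|$, and multiplying through by $\lambda$ yields $\|\lambda x + (1-\lambda)y\| = \|\lambda x - (1-\lambda)y\|$; call this common value $M$. Since the supremum defining $\beta(\lambda, x)$ can be probed by the given $y$, we get $\beta(\lambda, x) \geq M$ for free, so the real content is the reverse inequality
\[
\min\bigl\{\|\lambda x + (1-\lambda)y'\|,\ \|\lambda x - (1-\lambda)y'\|\bigr\} \leq M \quad \text{for every } y' \in S_{\mathbb{X}}.
\]

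I would prove this by a planar orientation argument. Fix the standard orientation $-y \prec x \prec y$, as allowed by the convention recorded in the introduction. Since replacing $y'$ by $-y'$ merely swaps the two norms in the minimum, I may restrict to $-y \prec y' \prec y$, and I then split this arc into two subcases at $x$: (A) $x \prec y' \prec y$, and (B) $-y \prec y' \prec x$. In case (A) the ray $[0, y'\rangle$ lies between $[0, x\rangle$ and $[0, y\rangle$, and $\|(1-\lambda)y'\| = 1-\lambda = \|(1-\lambda)y\|$, so Lemma \ref{monotonicity} yields $\|\lambda x - (1-\lambda)y'\| \leq \|\lambda x - (1-\lambda)y\| = M$. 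In case (B) the ray $[0, y'\rangle$ lies between $[0, -y\rangle$ and $[0, x\rangle$, and applying the same lemma with the equal-norm vector $-(1-\lambda)y$ in place of $(1-\lambda)y$ gives $\|\lambda x - (1-\lambda)y'\| \leq \|\lambda x - (-(1-\lambda)y)\| = \|\lambda x + (1-\lambda)y\| = M$. In either subcase, the minimum of the two quantities is at most $M$, which is the required bound.

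The main obstacle I foresee is not computational but purely geometric: keeping track of the precedence relations, justifying the reduction to the arc $-y \prec y' \prec y$ via the sign flip, and correctly choosing the sign of $(1-\lambda)y$ as the third point for Lemma \ref{monotonicity} in subcase (B). Once these bookkeeping items are settled, the argument collapses to a single application of the monotonicity lemma in each subcase, and the proposition follows.
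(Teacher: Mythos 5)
Your proposal is correct and follows essentially the same route as the paper: both arguments partition the unit circle by the rays through $\pm x$ and $\pm y$ and then invoke Lemma \ref{monotonicity} on each resulting arc to bound the minimum by the common value $M$. Your version is a mild streamlining of the paper's (two cases instead of four, thanks to the $y' \mapsto -y'$ symmetry, and one application of the monotonicity lemma per case rather than two), but the key idea is identical.
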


 \begin{proof}
 Let  $ x \perp_{I}(\frac{1-\lambda}{\lambda}) y.$ Then we get,  $ \|\lambda x + (1 - \lambda)y\| =  \|\lambda x - (1 - \lambda)y\|.$ Clearly, for any $ z\not= \pm y$ we have $(1 - \lambda) z  \not= \pm (1 - \lambda)y.$ Consider the following four sets : 
   \begin{align*}
       C_1 = \{(1-\lambda) \frac{(1-t)x + t y}{\|(1-t)x + t y\|} : 0 \leq t \leq 1\},\\
       C_2 = \{(1-\lambda) \frac{(1-t)y - t x}{\|(1-t)y - t x\|} : 0 \leq t \leq 1\},\\
       C_3 = \{(1-\lambda) \frac{-(1-t)x - t y}{\|-(1-t)x - t y\|} : 0 \leq t \leq 1\},\\
       C_4 = \{(1-\lambda) \frac{-(1-t)y + t x}{\|-(1-t)y + t x\|} : 0 \leq t \leq 1\},
     \end{align*}
  whose union is the circle of radius $|1-\lambda|$ and the sets $C_i$ intersect only at $ \pm (1-\lambda)x, \pm (1-\lambda)y.$ 
  Observe  that for any $z\in S_\mathbb{X},$ we have $(1 - \lambda)z \in C_i,$ for some $ i,~ 1\leq i \leq 4. $ Let us assume that $ (1 - \lambda)z \in C_1.$ Then applying Lemma \ref{monotonicity} it is straightforward to observe that $\|\lambda x + (1 - \lambda)z\| \geq \|\lambda x + (1 - \lambda)y\| $ whereas $\|\lambda x - (1 - \lambda) z\| \leq \|\lambda x - (1 - \lambda)y\|.$  Therefore, we obtain, 
  $ \min\{\|\lambda x - (1 - \lambda)y\|, \|\lambda x + (1 - \lambda)y\| \} =  \|\lambda x - (1 - \lambda)y\| \geq 
  \|\lambda x - (1 - \lambda)z\|\geq  \min\{\|\lambda x - (1 - \lambda)z\|, \|\lambda x + (1 - \lambda)z\|\}.$
  If  $ (1-\lambda)z \in C_i,$ for some $i\in \{2, 3, 4\}$ then we can  proceed similarly to conclude that 
$ \min\{\|\lambda x - (1 - \lambda)y\|, \|\lambda x + (1 - \lambda)y\| \} \geq \min\{\|\lambda x - (1 - \lambda)z\|, \|\lambda x + (1 - \lambda)z\|\}.$ As $z \in S_{\mathbb{X}}$ is arbitrary, we get $ \beta(\lambda, x) = \min\{\|\lambda x - (1 - \lambda)y\|,  \|\lambda x + (1 - \lambda)y\|\}=  \|\lambda x + (1 - \lambda)y\| =  \|\lambda x - (1 - \lambda)y\|. $  
 \end{proof}

 To determine the value of $J(\lambda, \mathbb{X})$ of a normed space $\mathbb{X},$ we observe the following:
 
 \begin{remark}
 Following Proposition \ref{gen}, it is easy to observe that for a given $ \lambda \in (0,1),$ 
\begin{eqnarray*}
 J(\lambda, \mathbb{X})& = & \sup\Big \{ \|\lambda x + (1 - \lambda)y\|: x,y \in S_{\mathbb{X}}, x \perp_{I}(\frac{1-\lambda}{\lambda}) y \Big \}\\
 &  = & \sup \Big \{\|\lambda x - (1 - \lambda)y\|: x,y \in S_{\mathbb{X}}, x \perp_{I}(\frac{1-\lambda}{\lambda}) y \Big \}.
\end{eqnarray*}
 Therefore, to find the generalized James constant $J(\lambda, \mathbb{X}),$ for a given $\lambda \in (0, 1),$ we only need to consider the subset $ \{(x, y) \in S_{\mathbb{X}} \times S_{\mathbb{X}} : x\perp_{I} (\frac{1-\lambda}{\lambda}) y\} \subseteq S_{\mathbb{X}} \times S_{\mathbb{X}}. $ 
 \end{remark}
 
 In the following theorem, we study the converse of Proposition \ref{gen}.

\begin{theorem}\label{general}
   Let $\mathbb{X}$ be a strictly convex normed space and $x \in S_{ \mathbb{X}},  \lambda \in (0, 1). $ If $ \beta(\lambda, x) = \min\{\|\lambda x + (1-\lambda) y\|, \| \lambda x - (1-\lambda) y\|\},$ for some  $y \in S_\mathbb{X}, $ then $ x \perp_{I} (\frac{1-\lambda}{\lambda})y.$ 
\end{theorem}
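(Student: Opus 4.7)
The plan is to reduce to two dimensions and then argue that, thanks to strict convexity, any attainer of $\beta(\lambda, x)$ must coincide (up to sign) with the unique unit vector isosceles orthogonal to $x$ in the appropriate scaling, whose existence is provided by Theorem~\ref{uniqueness}.

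First I would set $\mathbb{X}_0 = \mathrm{span}\{x, y\}$; this is a two-dimensional strictly convex subspace. Since $y \in S_{\mathbb{X}_0} \subseteq S_\mathbb{X}$ already attains the supremum defining $\beta_\mathbb{X}(\lambda, x)$, a sandwich argument shows $\beta_{\mathbb{X}_0}(\lambda, x) = \beta_\mathbb{X}(\lambda, x)$ and that this common value is attained at $y$ in $\mathbb{X}_0$ as well. It therefore suffices to prove the theorem under the assumption $\dim \mathbb{X} = 2$. Theorem~\ref{uniqueness} then produces a unique $y_1 \in S_\mathbb{X}$ with $-y_1 \prec x \prec y_1$ satisfying $x \perp_I \tfrac{1-\lambda}{\lambda} y_1$, and Proposition~\ref{gen} gives $\beta(\lambda, x) = \|\lambda x + (1-\lambda) y_1\| = \|\lambda x - (1-\lambda) y_1\|$. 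It remains to show that the attainer $y$ must lie in $\{\pm y_1\}$.

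To this end I would decompose the circle $(1-\lambda)S_\mathbb{X}$ into the four arcs $C_1,\ldots,C_4$ of the proof of Proposition~\ref{gen}, constructed from $y_1$; these arcs meet pairwise only at $\pm(1-\lambda)x$ and $\pm(1-\lambda)y_1$. If $(1-\lambda)y$ lies in the relative interior of some $C_i$, then Lemma~\ref{monotonicity} applies with \emph{strict} inequality (using strict convexity), and a case-by-case inspection of the four arcs shows that exactly one of the norms $\|\lambda x \pm (1-\lambda)y\|$ strictly exceeds, and the other strictly falls short of, the common value $\beta(\lambda, x)$; consequently
\[
\min\bigl\{\|\lambda x + (1-\lambda)y\|,\ \|\lambda x - (1-\lambda)y\|\bigr\} < \beta(\lambda, x),
\]
contradicting that $y$ attains $\beta(\lambda, x)$. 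The only remaining alternative to $y \in \{\pm y_1\}$ is $y \in \{\pm x\}$, for which the minimum equals $|2\lambda-1|$; a direct computation from the defining equation of $y_1$ shows $y_1 \neq \pm x$ whenever $\lambda \in (0, 1)$, and then strict convexity together with the reverse triangle inequality gives $\|\lambda x + (1-\lambda)y_1\| > |2\lambda-1|$, ruling out this case too. Hence $y \in \{\pm y_1\}$, and in either case $x \perp_I \tfrac{1-\lambda}{\lambda} y$.

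The principal obstacle I expect is the case-by-case inspection in the previous paragraph: one must confirm that, no matter which arc $C_i$ contains $(1-\lambda)y$, the strict decrease occurs in the minimum of the two norms and not merely in one of them. This is bookkeeping with the orientation $\prec$ that is analogous to, but slightly finer than, the one carried out in the proof of Proposition~\ref{gen}, because here one needs genuinely strict inequalities and one is perturbing $y$ rather than comparing to a fixed optimizer. Everything else reduces to direct computation or to invocations of the results cited above.
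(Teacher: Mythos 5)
Your proposal is correct and follows essentially the same route as the paper: reduce to the two-dimensional span of $x$ and $y$, invoke Theorem~\ref{uniqueness} to produce the unique unit vector isosceles orthogonal to $x$ in the scaling $\tfrac{1-\lambda}{\lambda}$, and then apply Lemma~\ref{monotonicity} with the strict inequality afforded by strict convexity to show that any attainer off that direction would yield a value strictly below $\beta(\lambda,x)$. The paper phrases the comparison via ``the ray $[0,(1-\lambda)y\rangle$ lies between $[0,\lambda x\rangle$ and $[0,\pm(1-\lambda)z\rangle$'' rather than your four-arc decomposition, and dismisses $y=\pm x$ with a one-line remark, but these are presentational differences only.
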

\begin{proof}
	Clearly $ x \neq \pm y.$ Since $x,y$ are linearly independent consider the two-dimensional subspace $\mathbb{Y} = span ~\{x, y\}.$  If possible let us assume that  $ x \not\perp_{I} (\frac{1-\lambda}{\lambda})y.$ Then either $ \|x + (\frac{1-\lambda}{\lambda})y\| > \|x - (\frac{1-\lambda}{\lambda})y\|$ or $ \|x - (\frac{1-\lambda}{\lambda})y\| > \|x + (\frac{1-\lambda}{\lambda})y\|.$ Without loss of generality we assume that $ \|x + (\frac{1-\lambda}{\lambda})y\| > \|x - (\frac{1-\lambda}{\lambda})y\|$ so that  $ \beta(\lambda, x) = \|\lambda x - (1-\lambda)y\|.$ Applying \textcolor{blue}{Theorem \ref{uniqueness}}, there exists a unique $ z \in S_{\mathbb{Y}}$ (except for the sign) such that $x\perp_{I} \frac{1-\lambda}{\lambda} z.$  Observe that either   \\
	 $(i)$ the ray $[0, (1-\lambda)y\rangle$ lies in between the rays $[0, \lambda x \rangle$ and $[0, (1- \lambda) z \rangle$ or \\
	$(ii)$ the ray $[0, (1-\lambda)y\rangle$ lies in between the rays $[0, \lambda x \rangle$ and $[0, -(1- \lambda )z \rangle.$\\
	Assume that $(i)$ holds. Since $ \lambda x, (1-\lambda y), (1- \lambda z) \neq \theta$ and $\|(1-\lambda)y\| = \|(1- \lambda)z\|$  applying Lemma \ref{monotonicity}, together with the assumption that $ \mathbb{X}$ is strictly convex, we conclude that $ \|\lambda x - (1- \lambda)y\| < \|\lambda x - (1- \lambda) z\|= \|\lambda x + (1-\lambda)z\|.$ This implies that $ \beta(\lambda, x) < \min\{\|\lambda x- (1-\lambda)z\|, \lambda x + (1-\lambda)z\|,$ a contradiction to the definition of $\beta(\lambda, x).$ If $(ii)$ holds then also we can proceed similarly. Thus we must have $ x \perp_{I} (\frac{1-\lambda}{\lambda})y.$  
\end{proof}

  It is easy to see that $ \beta (\frac{1}{2}, x) = \frac{1}{2}\beta(x),$ for any $x \in S_\mathbb{X}.$ Therefore, taking $\lambda = \frac{1}{2},$ we state the following result as a particular case of Theorem \ref{general} that studies the converse of \cite[Lemma 2.2(i)]{GL}.

 \begin{theorem}\label{local}
 Let $\mathbb{X} $ be  a strictly convex normed space and  let $x_0 \in S_{\mathbb{X}}.$ If  $ y_0 \in S_{\mathbb{X}}$ is such that $\beta(x_0) = \min\{ \|x_0-y_0\|, \|x_0+y_0\|\},$ then $x_0 \perp_{I} y_0.$ 
 \end{theorem}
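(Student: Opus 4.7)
The plan is to specialize Theorem \ref{general} to $\lambda = \tfrac{1}{2}$, as the statement is explicitly framed as a particular case of that more general result. The only ingredient needed beyond Theorem \ref{general} is the scaling identity $\beta\!\left(\tfrac{1}{2}, x\right) = \tfrac{1}{2}\beta(x)$ for every $x \in S_\mathbb{X}$, which is immediate from the definitions: both norms in the expression defining $\beta\!\left(\tfrac{1}{2}, x\right)$ carry a common factor of $\tfrac{1}{2}$, which can be pulled out of the $\min$ and the $\sup$.

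With this identity in hand, the hypothesis $\beta(x_0) = \min\{\|x_0 + y_0\|, \|x_0 - y_0\|\}$ rewrites, after multiplication by $\tfrac{1}{2}$, as
\[
\beta\!\left(\tfrac{1}{2}, x_0\right) = \min\!\left\{\left\|\tfrac{1}{2}x_0 + \tfrac{1}{2}y_0\right\|,\; \left\|\tfrac{1}{2}x_0 - \tfrac{1}{2}y_0\right\|\right\}.
\]
Applying Theorem \ref{general} with $\lambda = \tfrac{1}{2}$ then yields $x_0 \perp_I \tfrac{1 - 1/2}{1/2}\, y_0$, i.e., $x_0 \perp_I y_0$, as required.

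Since this is a direct corollary of the already-established Theorem \ref{general}, there is no substantive obstacle. If a self-contained argument were preferred, one could repeat the proof of Theorem \ref{general} verbatim with $\lambda = \tfrac{1}{2}$: pass to the two-dimensional subspace $\mathbb{Y} = \mathrm{span}\{x_0, y_0\}$, use Theorem \ref{uniqueness} to produce the unique (up to sign) $z \in S_\mathbb{Y}$ with $x_0 \perp_I z$, and then invoke Lemma \ref{monotonicity} together with strict convexity to compare $\|x_0 \pm y_0\|$ with $\|x_0 \pm z\|$ under each of the two possible orientations of $[0, y_0\rangle$ relative to $[0, x_0\rangle$ and $[0, \pm z\rangle$, obtaining in each case a strict inequality that contradicts the defining supremum of $\beta(x_0)$. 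Given Theorem \ref{general}, however, this duplication is unnecessary, and the one-line specialization is the natural route.
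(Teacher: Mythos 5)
Your proposal is correct and follows exactly the paper's route: the paper also obtains Theorem \ref{local} as the $\lambda=\tfrac{1}{2}$ specialization of Theorem \ref{general}, using the same scaling identity $\beta\bigl(\tfrac{1}{2},x\bigr)=\tfrac{1}{2}\beta(x)$. Nothing further is needed.
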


The following example illustrates that the condition of  strict convexity in the above theorem cannot be relaxed in general.

 \begin{example}\label{strict convexity}
     Let $\mathbb{X} = \ell_\infty^2$ and let $x_0 = (1, 0) \in S_{\mathbb{X}}.$ To compute $\beta((1,0)),$ we observe that any $y \in S_{\ell_\infty^2}$ can be written as either $ y = (\alpha, \pm1)$ or $ y = (\pm1, \alpha), $ where $-1\leq \alpha \leq 1.$ It is straightforward to observe that whenever $y = (\alpha, \pm1),$ $\min\{\|x_0-y\|_\infty, \|x_0+y\|_\infty\} = 1.$ On the other hand, $\min\{\|x_0-y\|_\infty, \|x_0+y\|_\infty\} = |\alpha| \leq 1,$ when $y = (\pm1, \alpha).$ Therefore, $\beta((1, 0)) = 1.$ Clearly, for any $y = (\alpha, \pm 1)$ with $ 0 < \alpha < 1,$ we have that
     \[
     \min\{\|x_0-y\|_\infty, \|x_0+y\|_\infty\} = \min\{ 1, |1+\alpha|\} = 1 = \beta((1, 0)). 
     \]
     In particular, we observe that isosceles orthogonality is not a necessary condition for the attainment of $ \beta(x), $ where $ x \in S_{\mathbb{X}}. $
 \end{example}

  \begin{remark}
 For another local constant $ \alpha(x),$ introduced in \cite{GL}, using similar arguments as in  Theorem \ref{local},  we conclude that if $ x_0, y_0 \in S_{\mathbb{X}}$ with $ \max\{\|x_0 - y_0\|, \|x_0 + y_0\|\}  = \alpha(x_0)$ then $x_0 \perp_{I} y_0,$ provided $\mathbb{X}$ is strictly convex.
  \end{remark}

   Regarding the attainment of the local James constant $ \beta(x) $ in an arbitrary Banach space, we have already noticed that if there exist $x_0, y_0 \in S_{\mathbb{X}}$ such that $\min\{\|x_0 - y_0\|, \|x_0 + y_0\|\} = \beta(x_0)$ then $x_0 \not\perp_{I} y_0,$ in general. However, as illustrated in the following theorem, we are going to observe a stronger behavior with respect to isosceles orthogonality, in the case of attainment of the corresponding global constant $J(\mathbb{X}).$

 \begin{theorem}\label{global}
 Let $\mathbb{X}$ be a normed space. Let $u, v \in S_{\mathbb{X}}$ be such that $\min\{\|u-v\|, \|u+v\|\} = J(\mathbb{X}).$ Then $u\perp_{I} v.$
 \end{theorem}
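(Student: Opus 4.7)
The plan is to exploit the supremum characterization of $J(\mathbb{X})$ provided by Theorem \ref{sup}, together with the defining inequality of the modulus of convexity $\delta_{\mathbb{X}}$ applied to the given attaining pair.

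Set $J := J(\mathbb{X})$. By replacing $v$ with $-v$ if necessary, I may assume $\|u-v\| \leq \|u+v\|$, so that $\|u-v\| = J$ and $\|u+v\| \geq J$. The aim then reduces to proving the reverse estimate $\|u+v\| \leq J$: together with $\|u+v\| \geq J$ this forces $\|u+v\| = \|u-v\|$, which is exactly $u \perp_{I} v$.

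First, since $u, v \in S_{\mathbb{X}}$ and $\|u-v\| = J$, the pair $(u,v)$ is admissible in the infimum defining $\delta_{\mathbb{X}}(J)$, giving
\[
\delta_{\mathbb{X}}(J) \;\leq\; 1 - \frac{\|u+v\|}{2}, \quad \text{that is,} \quad \|u+v\| \;\leq\; 2 - 2\delta_{\mathbb{X}}(J).
\]
The key step is to show $2 - 2\delta_{\mathbb{X}}(J) \leq J$. For $J = 2$ this is immediate from $\delta_{\mathbb{X}}(2) \geq 0$. For $J < 2$, argue by contradiction: if $2 - 2\delta_{\mathbb{X}}(J) > J$, then by continuity of $\delta_{\mathbb{X}}$ on $[0,2)$ (\cite[Cor. 5]{L}), the continuous map $\epsilon \mapsto 2 - 2\delta_{\mathbb{X}}(\epsilon) - \epsilon$ stays strictly positive at some $\epsilon_{0} > J$ close to $J$; hence $\epsilon_{0} < 2 - 2\delta_{\mathbb{X}}(\epsilon_{0})$, placing $\epsilon_{0}$ in the set whose supremum, by Theorem \ref{sup}, equals $J$. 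This contradicts $\epsilon_{0} > J$.

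Chaining the inequalities yields $\|u+v\| \leq 2 - 2\delta_{\mathbb{X}}(J) \leq J = \|u-v\| \leq \|u+v\|$, so equality holds throughout and $u \perp_{I} v$. The only delicate point is the continuity-based step establishing $2 - 2\delta_{\mathbb{X}}(J) \leq J$, but this reduces to a direct application of the cited continuity of $\delta_{\mathbb{X}}$ together with the strict-inequality form of Theorem \ref{sup}, so no further obstacle is expected.
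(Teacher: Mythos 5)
Your proposal is correct and follows essentially the same route as the paper: both reduce the claim to the sup characterization $J(\mathbb{X})=\sup\{\epsilon : \epsilon < 2-2\delta_{\mathbb{X}}(\epsilon)\}$ of Theorem \ref{sup} and use the continuity of $\delta_{\mathbb{X}}$ on $[0,2)$ to show that a strict inequality $\|u+v\|>\|u-v\|=J(\mathbb{X})$ would place some $\epsilon_0>J(\mathbb{X})$ in that set, contradicting the supremum. Your repackaging (proving $2-2\delta_{\mathbb{X}}(J)\leq J$ directly and chaining inequalities) is only a cosmetic reorganization of the paper's contradiction argument.
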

 
 \begin{proof}
    We prove the theorem by considering the following two possible cases.
    
    \smallskip
   \textit{Case} $(i)$: Let us assume that $J(\mathbb{X}) = 2.$ Then $\min\{\|u-v\|, \|u+v\|\}= 2. $ It is trivial to see that $\max\{\|x+y\|, \|x-y\|: x,y \in S_{\mathbb{X}}\} \leq 2.$  Therefore, it necessarily follows that $\|u+ v\| = \|u-v\|,$ i.e., $u \perp_{I} v.$
   
   \smallskip
   \textit{Case} $(ii)$: Suppose that $J(\mathbb{X}) < 2.$ Consider the set $S = \{\epsilon \in [0, 2) : \epsilon < 2-2\delta_{\mathbb{X}}(\epsilon)\},$ where $\delta_{\mathbb{X}}(\epsilon)= \inf \{1-\frac{\|x+y\|}{2} : x, y \in S_{\mathbb{X}} ~ and ~ \|x-y\| \geq \epsilon\}.$ From Theorem \ref{sup}, we observe that $ \sup S = J(\mathbb{X}) < 2.$   Suppose on the contrary that $u \not \perp_{I} v.$ Without loss of generality, let us assume that $\|u+v\| > \|u - v\|.$ Also, let $\|u - v\| = \epsilon_0 = J(\mathbb{X}) <2 .$ Then, $1- \frac{\|u+v\|}{2} < 1- \frac{\epsilon_0}{2}$ implies that $\delta_{\mathbb{X}}(\epsilon_0) < 1 - \frac{\epsilon_0}{2},$ i.e., $\epsilon_0 < 2 - 2\delta_{\mathbb{X}}(\epsilon_0).$ Therefore, $\epsilon_0 \in S. $ Now, from \cite[Cor. 5]{L}, we note that  $\delta_{\mathbb{X}}(\epsilon)$ is a continuous function on $[0, 2).$ Therefore, it is easy to verify that $S$ is an open set in $ \mathbb{R}, $ with its usual topology.  Since $\epsilon_0 \in S$ and $ S $ is open, it follows that there exists $ \mu_0 > 0 $ such that $ \epsilon_0 + \mu_0 \in S, $ which contradicts our assumption that $ \sup S = J(\mathbb{X}) = \epsilon_0. $ Hence $\|u - v\| = \|u+ v\|,$ i.e., $u \perp_{I} v,$ as claimed.  
 \end{proof}
 
\begin{remark}\label{attainment}
 For $x \in S_\mathbb{X}$ and $\epsilon \in [0, 1),$ let us consider the set $A(x, \epsilon) = \{ y\in S_\mathbb{X} : x\perp_I^{\epsilon} y \}.$ Now it is easy to see that for any $\epsilon > 0,$ if $z \in \{\mathbb{X} \setminus A(x, \epsilon)\} \cap S_\mathbb{X}$ then $\min \{ \|x+z\|, \|x-z\| \} < J(\mathbb{X}).$ For, otherwise, from Theorem \ref{global} we obtain $x \perp_I z,$ which contradicts $z \in \{\mathbb{X} \setminus A(x, \epsilon)\} \cap S_\mathbb{X}.$  
\end{remark}

 In view of the example \ref{strict convexity}, it is natural to speculate whether strict convexity is essential for Theorem \ref{local}. We negate this by means of the following explicit example, constructed with the help of Theorem \ref{global}.\\
 Let us recall from \cite{KST} that for each $\theta \in \mathbb{R},$ the $\theta$-rotation matrix $R(\theta)$ is given by 
  \[ R(\theta) = \begin{pmatrix} \cos\theta & -\sin\theta\\
 	\sin\theta & \cos\theta               \end{pmatrix}.
 \] 
 A norm $\|. \|$ on $ \mathbb{R}^2$ is said to be $ \theta$-invariant if $R(\theta)$ is an isometry on $(\mathbb{R}^2, \| . \|).$ 
  
  \begin{example}\label{octagon}
       Let $\mathbb{X}$ be the two-dimensional Banach space, identified as $ \mathbb{R}^2, $ endowed with the norm $ \|(x, y)\| = \max\{|x|, |y|, 2^{-1/2} (|x| + |y|)\} $ for any $(x, y) \in \mathbb{R}^2.$ It is easy to verify that $S_\mathbb{X}$ is a regular octagon, with vertices $\pm v_1 = \pm (1, \sqrt{2}-1), \pm v_2= \pm (\sqrt{2}-1, 1), \pm v_3=\pm (1-\sqrt{2}, 1), \pm v_4= \pm (-1, \sqrt{2}-1).$ The unit sphere is shown in the following figure:\\

  \begin{center}     
 \begin{tikzpicture}[scale=1.75]
 \draw[gray, thick] (1,0.414) coordinate[label=right:$v_1$] ($v_1$)
 -- (0.414,1) coordinate[label=above:$v_2$] ($v_2$)
 -- (-0.414,1) coordinate[label=above:$v_3$] ($v_3$)
 -- (-1,0.414)coordinate[label=left:$v_4$] ($v_4$)
 -- (-1,-0.414)  coordinate[label=left:$-v_1$] ($-v_1$)
 -- (-0.414,-1)  coordinate[label=below:$-v_2$] ($-v_2$)
 -- (0.414,-1)  coordinate[label=below:$-v_3$] ($-v_3$) 
 -- (1,-0.414)  coordinate[label=right:$-v_4$] ($-v_4$)
 -- cycle;
 
 \draw[dotted] (-1.5,0) -- (1.5,0);
 \draw[dotted] (0,-1.5) -- (0,1.5);
  \end{tikzpicture}
  \end{center}

       It is easy to see that the given norm on $\mathbb{R}^2$ is $\frac{\pi}{4}$-invariant. Let $E_1$ be the edge joining the vertices $-v_4, v_1.$ Therefore, the following two conditions are equivalent.\\
       
       $(i)$ If for any $\widetilde{x}\in E_1$ there exists an $\widetilde{y} \in S_{\mathbb{X}}$ such that $\min\{\|\widetilde{x} - \widetilde{y}\|, \|\widetilde{x} + \widetilde{y}\|\} = \beta(\widetilde{x})$ then $\widetilde{x} \perp_{I} \widetilde{y}.$\\
       
       $(ii)$ If for any $\widetilde{x}\in S_{\mathbb{X}}$ there exists an $\widetilde{y} \in S_{\mathbb{X}}$ such that $\min\{\|\widetilde{x} - \widetilde{y}\|, \|\widetilde{x} + \widetilde{y}\|\} = \beta(\widetilde{x})$ then $\widetilde{x} \perp_{I} \widetilde{y}.$\\
       
       We will show that $(i)$ holds true.  Any $u\in E_1$ can be written as $ u = (1, \gamma),$ where $|\gamma|\leq \sqrt{2}-1.$  Also, given any $u = (1, \gamma) \in E_1,$ we have $v = \pm (-\gamma, 1)\in S_\mathbb{X}$ such that $ u \perp_I  v.$ From  Lemma \ref{lemma}, we obtain that $ \beta(v)= \| u - v\|= \sqrt{2}. $ On the other hand, from  Proposition \ref{equal} it follows that $J(\mathbb{X}) = \sqrt{2}.$ This implies that $ (u, v) \in M_{J(\mathbb{X})}.$\\
       Since $\beta(\widetilde{x}) = \sqrt{2}$ for any $\widetilde{x} \in E_1,$ it is easy to see that $\min\{ \|\widetilde{x}+\widetilde{y}\|, \|\widetilde{x}-\widetilde{y}\|\}= \beta (\widetilde{x})$ implies that $(\widetilde{x}, \widetilde{y})\in M_{J(\mathbb{X})}.$ Now applying Theorem \ref{global}, we conclude that $\widetilde{x} \perp_{I} \widetilde{y}.$\\
        In particular, Theorem \ref{local} may indeed hold true for certain Banach spaces which are not strictly convex.
  \end{example}

As a complementary notion of the James constant, we may also consider the Sch\"affer constant, in view of Theorem \ref{global}.
 It can be shown similarly by using the method from \cite[Th. 3.3]{GL} that:
 \[  S(\mathbb{X}) = \inf \{ \epsilon : \epsilon > 2 - 2\rho_{\mathbb{X}}(\epsilon)\}.
 \]
 Recall that  $ \rho_{\mathbb{X}}(\epsilon)$ is continuous  and convex (see \cite{WY})  on $[0, 2].$ Therefore, applying similar methods as used in the Theorem \ref{global} we obtain the following result.
 
 \begin{theorem}
    Let $\mathbb{X}$ be a normed space. Let $ u, v \in S_{\mathbb{X}}$ be such that $\min\{\|u - v\|, \|u + v\|\} = S(\mathbb{X}). $ Then $u \perp_{I} v.$
    
 \end{theorem}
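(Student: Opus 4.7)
The plan is to mirror the proof of Theorem \ref{global}, swapping the roles of $\delta_{\mathbb{X}}$ and $\rho_{\mathbb{X}}$ and replacing the supremum by an infimum, as suggested by the comment preceding the statement. The key ingredient is the dual characterization of the Sch\"affer constant,
\[
S(\mathbb{X}) \;=\; \inf\{\epsilon : \epsilon > 2 - 2\rho_{\mathbb{X}}(\epsilon)\},
\]
displayed just above, together with continuity of $\rho_{\mathbb{X}}$ on $[0,2]$.

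I would argue by contradiction. Assuming $u \not\perp_I v$, one of $\|u-v\|, \|u+v\|$ is strictly larger than the other; after possibly replacing $v$ by $-v$, I may take $\|u-v\| > \|u+v\|$, so that under the Sch\"affer attainment hypothesis one has $\epsilon_0 := \|u-v\| = S(\mathbb{X})$ with $\|u+v\| < \epsilon_0$. The pair $(u,v) \in S_{\mathbb{X}} \times S_{\mathbb{X}}$ satisfies $\|u-v\| = \epsilon_0$, so it is admissible in the supremum defining $\rho_{\mathbb{X}}(\epsilon_0)$, whence
\[
\rho_{\mathbb{X}}(\epsilon_0) \;\geq\; 1 - \frac{\|u+v\|}{2} \;>\; 1 - \frac{\epsilon_0}{2}.
\]
Rearranging, $\epsilon_0 > 2 - 2\rho_{\mathbb{X}}(\epsilon_0)$, i.e., $\epsilon_0$ belongs to the set $T := \{\epsilon : \epsilon > 2 - 2\rho_{\mathbb{X}}(\epsilon)\}$, whose infimum is $S(\mathbb{X}) = \epsilon_0$.

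The argument then closes via openness: continuity of $\rho_{\mathbb{X}}$ on $[0,2]$ makes $\epsilon \mapsto \epsilon - 2 + 2\rho_{\mathbb{X}}(\epsilon)$ continuous, so $T$ is open in $\mathbb{R}$. Since $\epsilon_0 \in T$, the open set $T$ must contain points strictly smaller than $\epsilon_0$, contradicting $\epsilon_0 = \inf T$. Therefore $\|u-v\| = \|u+v\|$, that is, $u \perp_I v$.

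I expect the main point requiring any care is simply checking admissibility of the pair $(u,v)$ in the supremum defining $\rho_{\mathbb{X}}(\epsilon_0)$ so as to derive the key inequality; otherwise, the argument is essentially a mirror image of the James case, and is in fact slightly smoother, since $\rho_{\mathbb{X}}$ is continuous on the entire interval $[0,2]$, so no separate boundary analogue of the case $J(\mathbb{X}) = 2$ treated in Theorem \ref{global} needs to be handled.
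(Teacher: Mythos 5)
Your argument is precisely the mirroring of Theorem \ref{global} that the paper intends: it uses the same two ingredients the paper names just before the statement (the formula $S(\mathbb{X}) = \inf\{\epsilon : \epsilon > 2-2\rho_{\mathbb{X}}(\epsilon)\}$ and the continuity of $\rho_{\mathbb{X}}$ on $[0,2]$), and your observation that no analogue of the boundary case $J(\mathbb{X})=2$ is needed is correct, since $S(\mathbb{X})\geq 1>0$ and $\rho_{\mathbb{X}}$ is continuous up to the endpoints. The one point to flag is the step ``$\epsilon_0 := \|u-v\| = S(\mathbb{X})$'': there you are implicitly reading the hypothesis as $\max\{\|u-v\|,\|u+v\|\} = S(\mathbb{X})$, whereas the statement as printed says $\min$. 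Under the literal $\min$ hypothesis the \emph{smaller} of the two quantities equals $S(\mathbb{X})$, the pair $(u,v)$ is then not admissible in the supremum defining $\rho_{\mathbb{X}}(\epsilon_0)$, and the key inequality $\rho_{\mathbb{X}}(\epsilon_0)>1-\tfrac{\epsilon_0}{2}$ does not follow; indeed the $\min$ version of the theorem is false (in $\ell_\infty^2$ one has $S(\ell_\infty^2)=1$, and $u=(1,0)$, $v=(1,1)$ satisfy $\min\{\|u-v\|,\|u+v\|\}=1$ while $\|u+v\|=2$, so $u\not\perp_I v$). Your reading, with $\max$ in the hypothesis, is the correct formulation consistent with the definition of the Sch\"affer constant, and your proof of that version is sound; you should simply state explicitly that this is the version being proved.
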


  Next we  show that in a two-dimensional polyhedral Banach space, the James constant  is always attained at one of the extreme points of the unit ball. To achieve this we need the following lemma.
  
  \smallskip

 \begin{lemma}\label{orientation1}
  Let  $\mathbb{X}$ be a two-dimensional Banach space. Let $ v_1, v_2 \in S_\mathbb{X}$ be such that $ v_1 \not= \pm v_2$ and $v_1 \prec v_2.$ Suppose that $w_1, w_2 \in S_\mathbb{X}$ are such that $ v_i \perp_I w_i$ and $ -w_i \prec v_i \prec w_i, $ for $ i \in \{1, 2\}.$ Then $ w_1 \prec w_2.$

 \end{lemma}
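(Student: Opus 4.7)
The plan is to argue by contradiction, exploiting the continuity and algebraic structure of the isosceles-orthogonality partner map. Suppose that $w_1 \not\prec w_2$. First I would rule out $w_1 = \pm w_2$: both cases force, via the up-to-sign uniqueness in Theorem \ref{uniqueness} applied to $w_1 \perp_I v_1$ and $w_1 \perp_I v_2$, that $v_1 = \pm v_2$, contradicting the hypothesis. Hence $w_2 \prec w_1$ strictly.

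Next I would introduce the partner map $\Phi : S_\mathbb{X} \to S_\mathbb{X}$ sending $v$ to the unique $w \in S_\mathbb{X}$ with $v \perp_I w$ and $v \prec w$. Well-definedness follows from Theorem \ref{uniqueness} together with the orientation convention, and a standard compactness argument shows $\Phi$ is continuous: any subsequential limit of $\Phi(v_n)$ is, by continuity of the norm, isosceles orthogonal to $\lim v_n$ with the same orientation, hence equal to $\Phi(\lim v_n)$ by uniqueness. Two functional identities hold: $\Phi(-v) = -\Phi(v)$ (direct check using sign-invariance of $\perp_I$) and $\Phi \circ \Phi(v) = -v$ (using symmetry of $\perp_I$ together with the easily verified equivalence $v \prec w \iff w \prec -v$).

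The crux is to lift $\Phi$ via a continuous, orientation-preserving, $2\pi$-periodic parametrization $\gamma : \mathbb{R} \to S_\mathbb{X}$ (e.g.\ the radial parametrization from the Euclidean unit circle) to a continuous $\hat\Phi : \mathbb{R} \to \mathbb{R}$. The properties above translate to
\[
\hat\Phi(\theta) - \theta \in (0, \pi), \quad \hat\Phi(\theta + \pi) = \hat\Phi(\theta) + \pi, \quad \hat\Phi \circ \hat\Phi(\theta) = \theta + \pi .
\]
The third relation makes $\hat\Phi$ injective (if $\hat\Phi(\theta_1) = \hat\Phi(\theta_2)$ then $\theta_1 + \pi = \theta_2 + \pi$), hence strictly monotonic by continuity; the second, together with the induced $\hat\Phi(\theta + 2\pi) = \hat\Phi(\theta) + 2\pi$, forces $\hat\Phi$ to be strictly \emph{increasing}. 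Lifting $v_1 \prec v_2$ to $\theta_{v_2} - \theta_{v_1} \in (0, \pi)$, strict increase combined with $\hat\Phi(\theta + \pi) = \hat\Phi(\theta) + \pi$ yields $\theta_{w_1} < \theta_{w_2} < \theta_{w_1} + \pi$, i.e.\ $w_1 \prec w_2$, contradicting $w_2 \prec w_1$. The main obstacle is the careful construction of the lift and verification of these three equations; once in place, monotonicity and the final deduction are elementary real-analysis.
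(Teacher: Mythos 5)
Your proof is correct in outline (every sketched step can be filled in), but it takes a genuinely different route from the paper. The paper's argument is entirely local and much shorter: assuming $w_2 \prec w_1$, it places the rays in the cyclic order $v_1 \prec v_2 \prec w_2 \prec w_1 \prec -v_1$ and applies the monotonicity lemma (Lemma \ref{monotonicity}) twice, obtaining $\|v_1 - w_2\| \leq \|v_1 - w_1\| = \|v_1 + w_1\| \leq \|v_1 + w_2\|$ on one side and, via $w_2 \perp_I v_2$, the reverse inequality $\|v_1 - w_2\| \geq \|v_1 + w_2\|$ on the other; this forces $v_1 \perp_I w_2$, contradicting the uniqueness of the isosceles partner (Theorem \ref{uniqueness}). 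You instead build the partner map $\Phi$, verify $\Phi(-v)=-\Phi(v)$ and $\Phi\circ\Phi = -\mathrm{id}$, lift to the universal cover, and extract strict monotonicity of the lift. This is heavier machinery, and the real burden shifts to the continuity of $\Phi$ and the three displayed identities, which you only sketch --- though each does close: for continuity, a subsequential limit $w$ of $\Phi(v_n)$ satisfies $v\perp_I w$ and $\det(v,w)\geq 0$ with $\det(v,w)\neq 0$ since $w\neq \pm v$, hence $w=\Phi(v)$; and your equivalence $v\prec w \iff w\prec -v$ is a correct determinant computation. What your approach buys is a stronger global conclusion, namely that $\Phi$ is an orientation-preserving homeomorphism of $S_\mathbb{X}$ squaring to the antipodal map, which yields Lemma \ref{orientation1} and Remark \ref{orientation2} simultaneously. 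Two minor points: the contradiction framing is redundant, since the lifting argument proves $w_1\prec w_2$ directly; and the hypothesis $-w_i\prec v_i$ is automatic from $v_i\prec w_i$, which you implicitly use when identifying $w_i$ with $\Phi(v_i)$.
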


 \begin{proof}
 
It follows from Theorem \ref{uniqueness} that $ w_1 \neq\pm w_2.$ Suppose on the contrary that $ w_1 \not\prec w_2.$ Then $ w_2 \prec w_1.$ Therefore, the only possibility is that $ v_1 \prec v_2 \prec w_2 \prec w_1 \prec -v_1.$ This implies that the ray $ [0, w_2\rangle$ lies in between the rays $[0, v_1\rangle$ and $[0, w_1\rangle$ and the ray $ [0, v_2\rangle$ lies in between the rays $[0, v_1\rangle$ and $[0, w_2\rangle.$ Now applying Lemma \ref{monotonicity} we get, 
 \[
   \|v_1 - w_2\| \leq \|v_1 - w_1\| = \|v_1 + w_1\| \leq \|v_1 + w_2\|,
 \]
 and
 \[
  \|v_1 - w_2 \| = \|w_2 - v_1\| \geq \|w_2 - v_2 \| = \|w_2 + v_2\| \geq \|w_2 + v_1\| = \|v_1 + w_1\|. 
 \]
  Thus  $ \| v_1 + w_2\| = \| v_1 - w_2\|,$ which shows that $v_1$ is  isosceles orthogonal to $w_2$. This is a contradiction as $ w_1 \neq \pm w_2.$ Therefore, $ w_1 \prec w_2,$ as desired. 
 \end{proof}
 
 The following important remark, which is immediate from the above lemma, is also relevant for the proof of our next theorem.
 
 \begin{remark}\label{orientation2}
 Let $\mathbb{X}$ be a two-dimensional Banach space. Let $ v_1, v_2 \in S_\mathbb{X}$ be such that $ v_1 \not= \pm v_2$ and let $w_1, w_2 \in S_\mathbb{X}$ be such that $ v_i \perp_I w_i$ and let $ -w_i \prec v_i \prec w_i, $ for $ i \in \{1, 2\}.$ Without loss of generality we can assume that $v_1 \prec v_2.$ Suppose $ v \in S_\mathbb{X}$ is such that the ray $ [0, v\rangle$ lies in between the rays $ [0, v_1\rangle$  and $[0, v_2\rangle,$  which implies that $v_1 \prec v \prec v_2.$ From Lemma \ref{orientation1}, it can be concluded that $ w_1 \prec w \prec w_2.$ In other words, the ray $[0, w\rangle$ lies in between the rays $ [0, w_1\rangle$ and $[0, w_2\rangle,$ where $ v \perp_I w.$ 
 \end{remark}
 
 We are now in a position to prove the following result.
 
 \begin{theorem}\label{polyhedron}
    
    Let $ \mathbb{X}$ be a two-dimensional polyhedral Banach space. Then there exists $z \in E_\mathbb{X}$ such that $\beta(z) = J(\mathbb{X}),$ i.e., $ \|z + y\| = \|z - y\| = J(\mathbb{X}),$ where $y \in S_{\mathbb{X}}$ and $ z \perp_{I} y.$ 
 \end{theorem}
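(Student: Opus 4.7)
The plan is to exploit the piecewise-linearity of the polyhedral norm, together with the orientation monotonicity (Lemma \ref{orientation1}), to migrate from an arbitrary attaining pair of $J(\mathbb{X})$ to one whose first coordinate is extreme. Since $\mathbb{X}$ is finite-dimensional, $M_{J(\mathbb{X})} \neq \emptyset$, so I would pick any $(u_0, v_0) \in M_{J(\mathbb{X})}$, apply Theorem \ref{global} to obtain $u_0 \perp_I v_0$ and $\|u_0 + v_0\| = \|u_0 - v_0\| = J(\mathbb{X})$, and finish immediately if $u_0 \in E_\mathbb{X}$---or if $v_0 \in E_\mathbb{X}$, using the symmetry of isosceles orthogonality together with Lemma \ref{lemma}. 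From now on I assume $u_0$ lies in the relative interior of some edge $[z_i, z_{i+1}]$ of $S_\mathbb{X}$ with $z_i \prec u_0 \prec z_{i+1}$.

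Next I would parametrize $u(t) = (1-t)z_i + t z_{i+1}$ for $t \in [0,1]$, and let $y(t) \in S_\mathbb{X}$ be the unique isosceles orthogonal partner of $u(t)$ with $-y(t) \prec u(t) \prec y(t)$ (available by Theorem \ref{uniqueness} and Remark \ref{orientation2}); so $t \mapsto y(t)$ is continuous and orientation-preserving. The key observation is that on any maximal sub-interval on which $y(t)$ stays on a fixed edge $[w_j, w_{j+1}]$ of $S_\mathbb{X}$ and both $u(t)\pm y(t)$ stay in fixed cones of the polyhedral decomposition of $\mathbb{X}$, the isosceles equation $\|u(t)+y(t)\| = \|u(t)-y(t)\|$ reduces to a linear equation in $(u,y)$ that determines $y(t)$ as an affine function of $t$; hence $\beta(u(t)) = \|u(t)+y(t)\|$ is continuous and piecewise affine on $[0,1]$ with only finitely many breakpoints, each of one of the following types: (a) $t \in \{0,1\}$, so $u(t)$ is extreme; (b) $y(t)$ passes through an extreme point of $S_\mathbb{X}$; or (c)/(d) $u(t)\pm y(t)$ crosses a ray through an extreme point of $S_\mathbb{X}$.

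Since $\beta(u(t_0)) = J(\mathbb{X})$ is the global maximum of the continuous piecewise-affine function $\beta \circ u$, that maximum is attained at some breakpoint. The main technical step will be to show that type (c)/(d) breakpoints are necessarily convex kinks of $\beta \circ u$, and hence cannot host a strict local maximum: near a cone boundary through an extreme point, the polyhedral norm locally equals the maximum of two linear functionals (the supporting functionals of the two adjacent cones), so $\|u(t)+y(t)\|$ is locally the maximum of two affine functions of $t$, which is convex. Then either $\beta \circ u$ is locally constant equal to $J(\mathbb{X})$ at such a breakpoint (in which case I slide along the constant piece to an adjacent breakpoint), or the maximum is forced to lie at a type (a) or (b) breakpoint. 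By finiteness of the breakpoints the sliding terminates, giving a breakpoint $t^* \in [0,1]$ of type (a) or (b) with $\beta(u(t^*)) = J(\mathbb{X})$: in case (a) take $z = u(t^*) \in E_\mathbb{X}$ and $y = y(t^*)$; in case (b) the symmetry of $\perp_I$ together with Lemma \ref{lemma} gives $\beta(y(t^*)) \geq \min\{\|y(t^*)+u(t^*)\|, \|y(t^*)-u(t^*)\|\} = J(\mathbb{X})$, so $z = y(t^*) \in E_\mathbb{X}$ and $y = u(t^*)$ work. The hard part of this plan is the careful local verification of the convex-kink property at type (c)/(d) breakpoints, which must simultaneously handle the possible kink of $y(t)$ at a cone transition and the ``max of linear functionals'' local structure of the polyhedral norm.
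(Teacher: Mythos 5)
Your overall architecture is sound and runs parallel to the paper's: restrict attention to an edge of $S_\mathbb{X}$, follow the isosceles-orthogonal partner $y(t)$ using Theorem \ref{uniqueness} and Lemma \ref{orientation1}/Remark \ref{orientation2}, subdivide at the parameters where $y(t)$ passes through an extreme point, and argue that on each piece the maximum of $\beta\circ u$ sits at an endpoint; your use of the symmetry of $\perp_I$ together with Lemma \ref{lemma} in case (b) is also exactly what the paper does. However, the step you yourself flag as ``the hard part'' --- that the type (c)/(d) breakpoints are convex kinks of $\beta\circ u$ --- is a genuine gap, and the justification you sketch does not work as stated. Near a parameter $t^*$ where $u(t)+y(t)$ crosses a cone boundary you write $\|u(t)+y(t)\|=\max\{f(u(t)+y(t)),\,f'(u(t)+y(t))\}$ and conclude convexity as a maximum of affine functions; but $y(t)$ itself kinks at $t^*$ (its defining linear system changes when $f$ is replaced by $f'$), so each of $f(u(t)+y(t))$ and $f'(u(t)+y(t))$ is only piecewise affine across $t^*$, and a pointwise maximum of piecewise affine functions with possibly concave kinks need not be convex. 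As written, nothing rules out a strict local maximum of $\beta\circ u$ at such a crossing.

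The paper closes exactly this gap by a softer argument that never looks at the cone decomposition. On a subinterval where $y(t)$ stays on a single edge $[w_1,w_2]\subset S_\mathbb{X}$ with $u(0)\perp_I w_1$ and $u(1)\perp_I w_2$, it compares $y(t)$ with the affine interpolant $\tilde w(t)=(1-t)w_1+tw_2$ (still a unit vector, since the whole segment lies on the sphere), uses Lemma \ref{monotonicity} to get $\|u(t)\pm y(t)\|\le\|u(t)\pm\tilde w(t)\|$ for at least one choice of sign, and then applies the triangle inequality to obtain the secant bound $\beta(u(t))=\|u(t)+y(t)\|=\|u(t)-y(t)\|\le(1-t)\beta(u(0))+t\beta(u(1))$. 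Applied to arbitrary subintervals, this is precisely the convexity of $\beta\circ u$ between consecutive type (b) breakpoints that your plan needs, and it makes the type (c)/(d) breakpoints and the piecewise-affine bookkeeping (including the nondegeneracy of the linear system determining $y(t)$, which you also do not address) unnecessary. If you replace your local ``max of linear functionals'' step by this secant estimate, the remainder of your argument goes through.
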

 
 \begin{proof}
    
     Let  $v_1, v_2$ be two  extreme points of $ B_\mathbb{X}$  such that $ v_1 \prec v_2 $ and $ tv_1 + (1-t) v_2 \in S_{\mathbb{X}}, $ for all $t \in [0,1].$ Then there exist $ w_1, w_2 \in S_{\mathbb{X}}$ such that $ v_1 \perp_{I} w_1, v_2 \perp_I w_2$ and $ w_1 \prec w_2,$ by using Lemma \ref{orientation1}.  We  consider the  following two cases:
     
     \smallskip
     
     Case $1$ : At first we consider that $ \lambda w_1 + (1 - \lambda)w_2 \subset S_\mathbb{X},$ for all $ \lambda \in [0, 1].$ For any $v \in [v_1, v_2],$ we can write $v = t_0v_1 + (1-t_0)v_2,$ for some $t_0 \in [0, 1].$ Take $w \in S_\mathbb{X}$ such that $ v \perp_I w.$ By virtue of Remark \ref{orientation2}, it follows that the ray $ [0, w\rangle $ lies in between the rays $ [0, w_1\rangle$ and $[0, w_2\rangle.$ Now, if $ w = t_0w_1+ (1- t_0)w_2,$ then using Lemma  \ref{lemma}, we get
     \begin{eqnarray*}
          \beta(v) &=& \| v - w\|\\
                 &=& \|t_0v_1 + (1-t_0)v_2 - t_0w_1 - (1-t_0)w_2\|\\
                & \leq& t_0\|v_1-w_1\| + (1-t_0)\|v_2-w_2\|\\
                 &=& t_0 \beta(v_1) + (1-t_0) \beta(v_2).
     \end{eqnarray*}
     If $ w \not= t_0w_1+ (1- t_0)w_2$ then either $\|v - w\| \leq \| v- (t_0w_1+ (1- t_0)w_2)\|$ or $\|v - w\| > \|v - (t_0w_1+ (1- t_0)w_2) \|.$ Applying Lemma \ref{monotonicity}, it is straightforward to see that in the latter case we have $\|v + w\| \leq \| v + t_0w_1+ (1- t_0)w_2\|.$ Therefore, we get, 
     \begin{eqnarray*}
        \beta(v) &=& \| v \pm w\|\\
                 &=& \|t_0v_1 + (1-t_0)v_2 \pm w\|\\
                 &\leq& \|t_0v_1 + (1-t_0)v_2 \pm (t_0w_1 - (1-t_0)w_2)\|\\
                & \leq& t_0\|v_1\pm w_1\| + (1-t_0)\|v_2 \pm w_2\|\\
                 &=& t_0 \beta(v_1) + (1-t_0) \beta(v_2).
      \end{eqnarray*}
      Therefore,   $ \beta(v) \leq \max\{ \beta(v_1), \beta(v_2)\}.$
      
      \smallskip
      
       Case $2$ :  Let $\{\lambda w_1 + (1 -\lambda)w_2: \lambda \in [0,1] \}\not \subset S_\mathbb{X}.$ Assume that there exist $k$ extreme points $x_1, x_2, \ldots, x_k$ lying in between the rays $ [0, w_1\rangle$ and $[0, w_2\rangle$ such that $w_1\prec x_1 \prec  x_2\prec  \ldots\prec  x_k\prec w_2 . $ Then following Remark \ref{orientation2}, we get $ z_1, z_2, \ldots, z_k \in [v_1, v_2]$ such that $ v_1\prec z_1\prec z_2 \prec \ldots \prec z_k \prec v_2$  and $ z_i \perp_{I} x_i,$ for $ 1 \leq i \leq k. $ Considering the  segments $[v_1, z_1],$ $ [z_1, z_2], \ldots, [z_k, v_2] $ in place of $[v_1, v_2]$ and applying similar arguments as in  Case $1,$ we get,  for any $v \in [v_1, v_2],$ 
 \begin{eqnarray*}
    \beta(v) &\leq& \max\{ \beta(v_1), \beta(z_1), \ldots, \beta(z_k), \beta(v_2) \}\\
           &=& \max\{ \beta(v_1), \beta(x_1), \ldots, \beta(x_k), \beta(v_2)\}.
  \end{eqnarray*}
         Therefore, we observe that for any $ v \in [v_1, v_2],$ there exists $z \in E_\mathbb{X}$ such that $\beta(v) \leq \beta(z).$ As $v_1, v_2$ are chosen arbitrarily, we can conclude that for any $ v \in S_\mathbb{X}$ there exists $z \in E_\mathbb{X}$ such that $ \beta(v) \leq \beta(z).$ This completes the proof of the theorem.
     
\end{proof}

The following remark is immediate from  Theorem \ref{polyhedron}. 
\begin{remark}\label{remark}
 Let $\mathbb{X}$ be a two-dimensional polyhedral Banach space. Suppose that $\pm v_1, \pm v_2, \ldots, \pm v_m$ are the extreme points of $B_\mathbb{X}.$ From Theorem \ref{polyhedron}, it can be easily seen that to find the James constant $ J(\mathbb{X}), $ we only need to deal with the extreme points of the unit ball of $\mathbb{X}.$  Indeed, we can compute the James constant $ J(\mathbb{X}) $ in a more efficient way by the formula:
\[
 J(\mathbb{X}) = \max_{1 \leq i \leq m} \beta(v_i) = \max\{ \| v_i + w_i \| : 1 \leq i \leq m, w_i \in S_\mathbb{X} ~~ and~ ~v_i \perp_I w_i\}.
 \]
 
\end{remark}

 In the following example, we will show the applicability of Theorem \ref{polyhedron} towards explicitly computing the James constant, as described in Remark \ref{remark}. 

\begin{example}\label{irregular}
     
  Consider a two-dimensional polyhedral Banach space $\mathbb{X}$ whose unit sphere is an irregular hexagon, as shown in the following figure:\\

\begin{center}
 \begin{tikzpicture}[scale=1.25]
 
 \draw[gray, thick] (1,-1) coordinate[label=right:$v_1$] ($v_1$)
 -- (1,1) coordinate[label=right:$v_2$] ($v_2$)
 -- (1/2,2) coordinate[label=above:$v_3$] ($v_3$)
 -- (-1,1)coordinate[label=left:$-v_1$] ($-v_1$)
 -- (-1,-1)  coordinate[label=left:$-v_2$] ($-v_2$)
 -- (-1/2,-2)  coordinate[label=below:$-v_3$] ($-v_3$)
 -- cycle;
 
 \draw[dotted] (-2,0) -- (2,0);
 \draw[dotted] (0,-2.2) -- (0,2.2);
 
  \end{tikzpicture}
 \end{center}

      The vertices of $B_\mathbb{X}$ are $ \pm v_1=\pm (1, -1),\pm v_2= \pm(1, 1), \pm v_3 = \pm (\frac{1}{2}, 2).$ Clearly,  $\beta(x) = \beta(-x),$ for any $x \in \mathbb{X},$ so that we only need to calculate  $\beta(1, -1), \beta(1, 1), $ $ \beta(\frac{1}{2}, 2).$ By a straightforward computation, we have $ (1, -1) \perp_I \pm(\frac{9}{13}, \frac{21}{13}),$ $(1, 1) \perp_I \pm(-\frac{5}{17}, \frac{25}{17})$ and $(\frac{1}{2}, 2) \perp_I \pm(1, -\frac{2}{7}).$ Using \textcolor{blue}{Lemma \ref{lemma}}, we obtain that
      \begin{eqnarray*}
       \beta(1, -1) &= &\| (1, -1) + (\frac{9}{13}, \frac{21}{13})\| = \| (\frac{22}{13}, \frac{8}{13}) \| =  \frac{22}{13},\\
       \beta(1, 1) &=& \| (1, 1) + (-\frac{5}{17}, \frac{25}{17})\| = \| (\frac{12}{17}, \frac{42}{17})\| = \frac{22}{17},\\
       \beta(\frac{1}{2}, 2) &=& \|(\frac{1}{2}, 2) + (1, -\frac{2}{7}) = \|(\frac{3}{2}, \frac{12}{7})\| = \frac{11}{7}.
      \end{eqnarray*}
     Thus  $ J(\mathbb{X}) = \max \{ \frac{22}{13}, \frac{22}{17}, \frac{11}{7}\} = \frac{22}{13}.$

\end{example}
 The above example illustrates that the problem of finding the James constant in a two-dimensional polyhedral Banach space $\mathbb{X} $ is equivalent to calculating the local constant $\beta(x)$ only for the finitely many extreme points of $ B_{\mathbb{X}}.$
\smallskip

 In the rest of the article, we study approximate isosceles orthogonality and its role in the attainment of the modulus of convexity, an important geometric constant associated with a given normed space. 
   We begin with the following basic observation.

\begin{prop}

Let $ \mathbb{X} $ be a normed space and let $x, y \in S_{\mathbb{X}} $ with $x \neq \pm y.$ Then there exists an $\epsilon \in [0, 1)$ such that $ x \perp_{I}^{\epsilon} y.$ 

\end{prop}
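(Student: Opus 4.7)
The plan is to work directly from the definition of approximate isosceles orthogonality. Since $\|x\|=\|y\|=1$, the required inequality $|\|x+y\|^2 - \|x-y\|^2| \leq 4\epsilon \|x\|\|y\|$ reduces to $|\|x+y\|^2 - \|x-y\|^2| \leq 4\epsilon$. Therefore it suffices to show that the quantity
\[
\eta := \frac{|\|x+y\|^2 - \|x-y\|^2|}{4}
\]
is strictly less than $1$, for then we may simply set $\epsilon := \eta \in [0,1)$ and the relation $x \perp_I^{\epsilon} y$ holds by construction.

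To establish $\eta < 1$, I would first observe, using the triangle inequality and $\|x\|=\|y\|=1$, that $\|x+y\| \leq 2$ and $\|x-y\| \leq 2$, so both squares lie in $[0,4]$. Next I would exploit the hypothesis $x \neq \pm y$: this guarantees that $\|x-y\| > 0$ and $\|x+y\| > 0$, hence both squares are strictly positive. Writing $a = \|x+y\|^2$ and $b = \|x-y\|^2$ with $0 < a,b \leq 4$, we obtain $|a-b| < \max\{a,b\} \leq 4$, which gives $\eta < 1$, as required.

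There is essentially no obstacle here; the proposition is a direct consequence of the triangle inequality together with the strict positivity of $\|x \pm y\|$ guaranteed by $x \neq \pm y$. The only thing worth emphasizing in the write-up is that both quantities $\|x+y\|$ and $\|x-y\|$ must be handled symmetrically, since the absolute value inside the definition of $\perp_I^{\epsilon}$ does not distinguish which of the two is larger.
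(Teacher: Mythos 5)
Your proposal is correct and follows essentially the same route as the paper: both reduce the claim to showing $|\|x+y\|^2 - \|x-y\|^2| < 4$ (using $x \neq \pm y$ to get $\|x \pm y\| > 0$) and then choose $\epsilon$ accordingly. In fact your argument is slightly more complete, since you justify the strict inequality via $|a-b| < \max\{a,b\} \leq 4$ when $\min\{a,b\}>0$, whereas the paper simply asserts that the quantity equals $4-\epsilon_0$ with $0<\epsilon_0<4$.
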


 \begin{proof}
  If $ x \perp_I y $ then we are done by taking $ \epsilon = 0. $ Suppose that $ x \not\perp_I y.$ Since $x \neq \pm y,$ it follows that $ |\|x+y\|^2 - \| x-y \|^2| = 4 - \epsilon_0,$ for some $ 0<\epsilon_0<4. $ Therefore, choosing $ \epsilon \in [\frac{4-\epsilon_0}{4}, 1) $ we conclude that $ |\| x+ y\|^2 - \|x - y \|^2 |\leq 4\epsilon,$ i.e., $ x \perp_I^{\epsilon} y. $  
 \end{proof}

 For a given $\epsilon \in [0,2],$ let us define the set:
 \[
  M_{\delta_{\mathbb{X}}(\epsilon)} = \Big \{ (x, y) \in S_\mathbb{X} \times S_\mathbb{X} : 1- \frac{\|x +y\|}{2} = \delta_{\mathbb{X}}(\epsilon) \Big\}.
 \] $  M_{\delta_{\mathbb{X}}(\epsilon)} $ is called the attainment set of $\delta_{\mathbb{X}}(\epsilon),$ for any $\epsilon \in [0, 2].$ It is clear that whenever $\mathbb{X}$ is finite-dimensional, $M_{\delta_{\mathbb{X}}(\epsilon)} \neq \emptyset.$ Our next result shows that the attainment of $\delta_{\mathbb{X}}(\epsilon)$ is closely related to approximate isosceles orthogonality.
 
 \begin{theorem}\label{modulus}
 Let $\mathbb{X}$ be a normed space. Let $ M_{\delta_{\mathbb{X}}(\epsilon)} \neq \emptyset,$ for some $\epsilon \in (0, 2).$ Then there exists $(u_0, v_0) \in M_{\delta_{\mathbb{X}}(\epsilon)}$ such that $ u_0 \perp_I^{\epsilon_0} v_0,$ where 
 $\epsilon_0 = |1+ \delta_{\mathbb{X}}(\epsilon)^2 - 2\delta_{\mathbb{X}}(\epsilon)- \frac{\epsilon^2}{4}|\in [0, 1).$
 \end{theorem}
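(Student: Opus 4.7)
The plan is to exhibit a pair $(u_0, v_0) \in M_{\delta_{\mathbb{X}}(\epsilon)}$ with $\|u_0 - v_0\| = \epsilon$ exactly, after which the orthogonality $u_0 \perp_I^{\epsilon_0} v_0$ follows by direct arithmetic. Writing $\delta = \delta_{\mathbb{X}}(\epsilon)$, membership in $M_{\delta_{\mathbb{X}}(\epsilon)}$ forces $\|u_0 + v_0\|^2 = 4(1-\delta)^2$, and so
\[
\bigl|\|u_0 + v_0\|^2 - \|u_0 - v_0\|^2\bigr| = \bigl|4(1-\delta)^2 - \epsilon^2\bigr| = 4\bigl|(1-\delta)^2 - (\epsilon/2)^2\bigr| = 4\epsilon_0,
\]
which is exactly the defining inequality of $u_0 \perp_I^{\epsilon_0} v_0$ since $\|u_0\|\|v_0\| = 1$. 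Observe also that $\delta < 1$ under our hypothesis $\epsilon < 2$, since $\delta = 1$ would force every $(x, y) \in S_{\mathbb{X}}^2$ with $\|x - y\| \geq \epsilon$ to satisfy $x + y = 0$ and hence $\|x - y\| = 2$, contradicting $\epsilon < 2$. In particular, any $(u, v) \in M_{\delta_{\mathbb{X}}(\epsilon)}$ has $u \neq -v$.

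To construct $(u_0, v_0)$, first select $(u, v) \in M_{\delta_{\mathbb{X}}(\epsilon)}$ with $\|u - v\| \geq \epsilon$: in finite dimensions this is immediate because the continuous map $(x, y) \mapsto \|x + y\|$ attains its maximum on the compact set $\{(x, y) \in S_{\mathbb{X}}^2 : \|x - y\| \geq \epsilon\}$, and the general case is handled similarly using the continuity of $\delta_{\mathbb{X}}$ on $[0, 2)$. If $\|u - v\| = \epsilon$ we are done; otherwise set $t = \|u - v\| > \epsilon$ and work inside the two-dimensional subspace $\mathbb{Y} = \operatorname{span}\{u, v\}$. Define
\[
v_\mu = \frac{(1-\mu) v + \mu u}{\|(1-\mu) v + \mu u\|}, \qquad \mu \in [0, 1],
\]
which is well-defined and continuous because $u \neq -v$, with $v_0 = v$ and $v_1 = u$. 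Since $\|u - v_0\| = t > \epsilon > 0 = \|u - v_1\|$, the intermediate value theorem furnishes $\mu^* \in (0, 1)$ with $\|u - v_{\mu^*}\| = \epsilon$. Meanwhile, $v_\mu$ lies on the arc of $S_{\mathbb{Y}}$ between $v$ and $u$, so the ray $[0, v\rangle$ lies in the angular region between $[0, -u\rangle$ and $[0, v_\mu\rangle$; applying Lemma \ref{monotonicity} with $x = -u$, $y = v$, $z = v_\mu$ (both $v$ and $v_\mu$ having norm $1$) yields $\|u + v\| \leq \|u + v_\mu\|$ for every $\mu \in [0, 1]$. Combined with the reverse inequality $\|u + v_{\mu^*}\| \leq 2(1-\delta)$ provided by the definition of $\delta_{\mathbb{X}}(\epsilon)$ (as $\|u - v_{\mu^*}\| = \epsilon$), we conclude $\|u + v_{\mu^*}\| = 2(1-\delta)$. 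Thus $(u_0, v_0) := (u, v_{\mu^*}) \in M_{\delta_{\mathbb{X}}(\epsilon)}$ with $\|u_0 - v_0\| = \epsilon$.

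Finally, to verify $\epsilon_0 \in [0, 1)$: since $\delta \in [0, 1)$ and $\epsilon \in (0, 2)$, we have $(1-\delta)^2 \in (0, 1]$ and $(\epsilon/2)^2 \in (0, 1)$, so $|(1-\delta)^2 - (\epsilon/2)^2| = 1$ would force either $(1-\delta)^2 = 1 + (\epsilon/2)^2 > 1$ or $(\epsilon/2)^2 = 1 + (1-\delta)^2 \geq 1$, both impossible. The principal obstacle in the proof is the adjustment step: the interplay of the intermediate value theorem along the arc $(v_\mu)$ with the two-dimensional monotonicity Lemma \ref{monotonicity} is precisely what forces the adjusted pair to remain in $M_{\delta_{\mathbb{X}}(\epsilon)}$. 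A secondary subtlety is the initial selection of $(u, v) \in M_{\delta_{\mathbb{X}}(\epsilon)}$ with $\|u - v\| \geq \epsilon$, which is routine in finite dimensions but demands more care in general normed spaces.
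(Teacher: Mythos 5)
Your proof is correct and follows essentially the same route as the paper: both arguments reduce the problem to producing a pair in $M_{\delta_{\mathbb{X}}(\epsilon)}$ with $\|u_0-v_0\|=\epsilon$ exactly, using Lemma \ref{monotonicity} inside $\operatorname{span}\{u,v\}$ to show that moving $v$ to distance $\epsilon$ from $u$ cannot decrease $\|u+\cdot\|$, and then conclude by the same arithmetic identity. The only difference is presentational — you construct the point $v_{\mu^*}$ explicitly via the intermediate value theorem, whereas the paper argues by contradiction with the level set $P_u=\{w\in S_{\mathbb{X}}:\|u-w\|=\epsilon\}$ — and your preliminary "selection" step is superfluous since the hypothesis $M_{\delta_{\mathbb{X}}(\epsilon)}\neq\emptyset$ already provides the starting pair.
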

 
 \begin{proof}
 
 Suppose that $(u, v) \in M_{\delta_{\mathbb{X}}(\epsilon)}.$ Since $\epsilon \in (0, 2),$ it is clear that $ u \neq \pm v.$ Consider the set $ P_u = \{ w \in S_\mathbb{X} : \| u - w\|= \epsilon\}.$ We claim that there exists $w' \in P_u$ such that $(u, w') \in M_{\delta_{\mathbb{X}}(\epsilon)}.$ If $v \in P_u$ then our claim holds true. Let us now assume that $v \notin P_u.$ Suppose on the contrary that the claim is not true. Then clearly, $ \delta_{\mathbb{X}}(\epsilon) < 1 - \frac{\|u+w\|}{2}$ for all $w \in P_u,$ i.e., $ \|u+v\| > \|u + w\|.$ Considering the two-dimensional subspace $\mathbb{Y} = span\{u, v\}$ and applying Lemma \ref{monotonicity}, we obtain that $\|u - v\| \leq \|u -w\|$ for all $ w \in P_u.$ As $v \notin P_u,$ we have $\|u-v\| < \|u-w\|$ for all $w \in P_u,$ which is a contradiction to the fact that $ \|u-v\| \geq \epsilon.$ This establishes our claim.
  It is now easy to observe that there exists $(u_0, v_0) \in M_{\delta_{\mathbb{X}}(\epsilon)}$ such that $\|u_0 -v_0\| = \epsilon.$ This implies that $|\|u_0 + v_0\|^2 - \|u_0 - v_0\|^2| = 4 |1+ \delta_{\mathbb{X}}(\epsilon)^2 - 2\delta_{\mathbb{X}}(\epsilon)- \frac{\epsilon^2}{4}|.$ Let $\epsilon_0 = |1+ \delta_{\mathbb{X}}(\epsilon)^2 - 2\delta_{\mathbb{X}}(\epsilon)- \frac{\epsilon^2}{4}|.$ Then $ 0 \leq \epsilon_0 < 1$ and  $|\|u_0 + v_0\|^2 - \|u_0 - v_0\|^2| = 4\epsilon_0,$ which shows that $ u_0 \perp_I^{\epsilon_0} v_0.$

 \end{proof}

  In case $\mathbb{X}$ is strictly convex, we have the following corollary to the above theorem.
 \begin{cor}
 Let $\mathbb{X}$ be a strictly convex normed space and let $ \epsilon \in (0,2).$  If $(u, v) \in M_{\delta_{\mathbb{X}}(\epsilon)}$ then $u \perp_{I}^{\epsilon_0} v ,$ where $\epsilon_0 = |1+ \delta_{\mathbb{X}}(\epsilon)^2 - 2\delta_{\mathbb{X}}(\epsilon)- \frac{\epsilon^2}{4}|\in [0, 1).$ 
 \end{cor}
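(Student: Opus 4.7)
The corollary sharpens Theorem \ref{modulus}: under strict convexity, \emph{every} attainer of $\delta_{\mathbb{X}}(\epsilon)$ is approximately isosceles orthogonal, not merely some distinguished one. The plan is to show that under strict convexity the equality $\|u-v\|=\epsilon$ holds for every $(u,v)\in M_{\delta_{\mathbb{X}}(\epsilon)}$; once this is secured, the conclusion follows from a direct computation.

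Fix an arbitrary $(u,v)\in M_{\delta_{\mathbb{X}}(\epsilon)}$ and work in the two-dimensional subspace $\mathbb{Y}=\mathrm{span}\{u,v\}$, which is itself strictly convex. Observe that as $w$ traverses one of the two arcs of $S_{\mathbb{Y}}$ from $u$ to $-u$, the function $w\mapsto \|u-w\|$ is continuous and, by the strict version of Lemma \ref{monotonicity}, strictly increasing from $0$ to $2$, while $w\mapsto\|u+w\|$ is strictly decreasing from $2$ to $0$.

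Suppose, towards a contradiction, that $\|u-v\|>\epsilon$. Choose the arc of $S_{\mathbb{Y}}$ containing $v$; by the intermediate value theorem there exists $v'$ on that arc with $\|u-v'\|=\epsilon$, and it lies strictly between $u$ and $v$ on the arc. The strict monotonicity then yields $\|u+v'\|>\|u+v\|$, so
\[
1-\frac{\|u+v'\|}{2}<1-\frac{\|u+v\|}{2}=\delta_{\mathbb{X}}(\epsilon).
\]
Since $\|u-v'\|=\epsilon\geq \epsilon$, this contradicts the definition of $\delta_{\mathbb{X}}(\epsilon)$ as an infimum. Hence $\|u-v\|=\epsilon$.

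Finally, from $\|u+v\|=2(1-\delta_{\mathbb{X}}(\epsilon))$ and $\|u-v\|=\epsilon$, a direct computation gives
\[
\bigl|\|u+v\|^{2}-\|u-v\|^{2}\bigr|
=4\Bigl|1+\delta_{\mathbb{X}}(\epsilon)^{2}-2\delta_{\mathbb{X}}(\epsilon)-\tfrac{\epsilon^{2}}{4}\Bigr|=4\epsilon_{0},
\]
and $\epsilon_{0}\in[0,1)$ (as in Theorem \ref{modulus}), so $u\perp_{I}^{\epsilon_{0}}v$. The only delicate point is choosing the correct arc of $S_{\mathbb{Y}}$ when producing $v'$, so that $v'$ lies strictly between $u$ and $v$ and Lemma \ref{monotonicity} applies with the correct orientation; strict convexity of $\mathbb{Y}$ is exactly what upgrades the non-strict inequality in Lemma \ref{monotonicity} to the strict inequality $\|u+v'\|>\|u+v\|$ required to contradict the infimum.
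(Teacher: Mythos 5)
Your proposal is correct and follows essentially the same route as the paper: assume $\|u-v\|>\epsilon$, produce a point $w$ (your $v'$) with $\|u-w\|=\epsilon$ lying between $u$ and $v$, invoke Lemma \ref{monotonicity} with strict convexity to get $\|u+w\|>\|u+v\|$, contradict the infimum defining $\delta_{\mathbb{X}}(\epsilon)$, and finish with the same algebraic computation as in Theorem \ref{modulus}. The only cosmetic difference is that the paper quantifies over the whole set $P_u=\{w\in S_{\mathbb{X}}:\|u-w\|=\epsilon\}$ rather than selecting one point via the intermediate value theorem.
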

 
 \begin{proof}
 
  Given $ \epsilon \in (0, 2),$ we only need to show that for any $(u, v) \in M_{\delta_{\mathbb{X}}(\epsilon)},$ it necessarily follows that $ \|u-v\|= \epsilon.$  Suppose on the contrary that $ \|u-v\| > \epsilon.$ Consider the set $ P_u = \{ w \in S _{\mathbb{X}} : \|u-w \| = \epsilon\}.$ Clearly, $ v \not\in P_u$  and for any $ w \in P_u,$ we have that $ \| u-v \| > \|u-w\|.$ Therefore, by Lemma \ref{monotonicity}, together with strict convexity, we get $ \|u+v\| < \|u+w\|$ and so $ 1 - \frac{1}{2}\|u+v\| > 1 - \frac{1}{2}\|u+w\|,$ which contradicts the fact that $\delta_{\mathbb{X}}(\epsilon) = 1-\frac{\|u+v\|}{2}.$ Now proceeding similarly as in the proof of Theorem \ref{modulus}, we obtain the desired conclusion.
 
 \end{proof}
 
 In connection with the explicit computation of $ \delta_{\mathbb{X}}(\epsilon), $ the following remark seems relevant.
 
 \begin{remark}
 
 For $ \epsilon \in (0, 2),$ let us consider the set :
 \[ G_\epsilon = \{ (u, v) \in S_{\mathbb{X}} \times S_{\mathbb{X}} : u \perp_{I}^{\epsilon_0} v ~ and ~ \|u - v\| = \epsilon\}, \]
 where $\epsilon_0 =|1+ \delta_{\mathbb{X}}(\epsilon)^2 - 2\delta_{\mathbb{X}}(\epsilon)- \frac{\epsilon^2}{4}|. $ Clearly, $ G_\epsilon$ is a closed set with respect to the usual product topology defined on $ \mathbb{X} \times \mathbb{X}.$ It can be readily seen that whenever $\mathbb{X}$ is finite-dimensional, there exists $ (u_1, v_1) \in G_\epsilon$ such that $ \delta_{\mathbb{X}}(\epsilon) = 1 - \frac{\|u_1 + v_1\|}{2}. $ Therefore, we conclude that to find the value of $ \delta_{\mathbb{X}}(\epsilon), $ for any $ \epsilon \in (0, 2),$ we only need to take into account the subset $ G_\epsilon.$    
 \end{remark}

In \cite{SPM}, the authors explored the geometric structure of the approximate Birkhoff-James orthogonality set. Motivated by this, we study the same in the case of approximate isosceles orthogonality, in our next theorem. For this purpose, we consider the $ \epsilon $-approximate isosceles orthogonality set $ A(x, \epsilon), $ corresponding to the vector $ x \in S_{\mathbb{X}} $ and $ \epsilon \in [0, 1), $ as defined in Remark \ref{attainment}:\\
  We end the present article with the following characterization of $ A(x, \epsilon). $  
 
\begin{theorem}
   Let $\mathbb{X}$ be a two-dimensional Banach space. Then for any $x \in S_\mathbb{X},$ $A(x, \epsilon) = D \cup -D,$ where $D$ is a connected subset of $S_{\mathbb{X}}.$ 
\end{theorem}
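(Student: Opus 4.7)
The plan is to reduce the problem to analyzing the sublevel sets of a single monotonic function along a parametrization of $S_\mathbb{X}$, using the Monotonicity Lemma. Fix $x \in S_\mathbb{X}$ and let $\gamma : [0, 2\pi] \to S_\mathbb{X}$ be a continuous parametrization with $\gamma(0) = \gamma(2\pi) = x$, $\gamma(\pi) = -x$ and $\gamma(t + \pi) = -\gamma(t)$ for $t \in [0, \pi]$; such a parametrization is built by choosing any homeomorphism from $[0, \pi]$ onto one of the two arcs from $x$ to $-x$ and extending antipodally. Consider the continuous odd function $f(y) = \|x + y\|^2 - \|x - y\|^2$, so that $A(x, \epsilon) = \{ y \in S_\mathbb{X} : |f(y)| \leq 4\epsilon \}$. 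By Theorem \ref{uniqueness}, the zero set of $f$ on $S_\mathbb{X}$ is exactly $\{ y_0, -y_0 \}$ for some $y_0 \in S_\mathbb{X}$, and we may assume $\gamma(s_0) = y_0$ for some $s_0 \in (0, \pi)$.

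The main step is to show that $g := f \circ \gamma$ is non-increasing on $[0, \pi]$. For $0 \leq t_1 < t_2 \leq \pi$, the ray $[0, \gamma(t_1)\rangle$ lies between $[0, x\rangle$ and $[0, \gamma(t_2)\rangle$, so Lemma \ref{monotonicity} yields $\|x - \gamma(t_1)\| \leq \|x - \gamma(t_2)\|$. A symmetric application using $-\gamma(t_1), -\gamma(t_2)$ on the opposite arc from $-x$ to $x$ (where $[0, -\gamma(t_2)\rangle$ lies between $[0, x\rangle$ and $[0, -\gamma(t_1)\rangle$) gives $\|x + \gamma(t_2)\| \leq \|x + \gamma(t_1)\|$. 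Combining, $g(t_2) \leq g(t_1)$. Since $g(0) = 4 > 4\epsilon$, $g(\pi) = -4 < -4\epsilon$ (recall $\epsilon < 1$), and $g(s_0) = 0$, the monotonicity and continuity of $g$ imply that $I := \{ t \in [0, \pi] : |g(t)| \leq 4\epsilon \}$ is a nonempty closed subinterval of $(0, \pi)$ containing $s_0$.

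Define $D := \gamma(I)$. As the continuous image of an interval, $D$ is a connected subset of $S_\mathbb{X}$ containing $y_0$. Using $\gamma(t + \pi) = -\gamma(t)$ together with the oddness of $f$, the identical analysis on $[\pi, 2\pi]$ produces $\{ t \in [\pi, 2\pi] : |g(t)| \leq 4\epsilon \} = I + \pi$ and $\gamma(I + \pi) = -D$. Consequently $A(x, \epsilon) = \gamma(I) \cup \gamma(I + \pi) = D \cup (-D)$, as required.

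The principal technical hurdle is verifying the betweenness hypothesis needed to invoke the Monotonicity Lemma, which requires setting up $\gamma$ so that the rays $[0, \gamma(t)\rangle$ rotate monotonically as $t$ traverses $[0, \pi]$; once this is in place, the rest is a routine combination of the intermediate value theorem and the antipodal symmetry of $S_\mathbb{X}$.
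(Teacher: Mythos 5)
Your proof is correct and follows essentially the same route as the paper: both arguments rest on Lemma \ref{monotonicity} to show that $\|x-\cdot\|$ and $\|x+\cdot\|$ vary monotonically along an arc from $x$ to $-x$, so that the approximate-orthogonality condition cuts out a closed subarc around the unique isosceles-orthogonal direction given by Theorem \ref{uniqueness}, with antipodal symmetry then yielding $D\cup(-D)$. The only difference is packaging: the paper splits the half-circle at $y$ into two pieces parametrized by $u_t$ and $v_t$ and shows each set $R$, $L$ is an interval of the form $[t_R,1]$, $[t_L,1]$, whereas you run the same monotonicity argument through a single non-increasing function $g=f\circ\gamma$ on one parametrized arc.
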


\begin{proof}
 We note from  Theorem \ref{uniqueness} that for $x \in S_{\mathbb{X}},$ there exists a unique (except for the sign) $y \in S_{\mathbb{X}}$ such that $x \perp_{I} y.$ For each $t \in [0,1],$  let $u_t =\frac{(1-t)x+ty}{\|(1-t)x+ty\|} $ and $v_t=  \frac{-(1-t)x+ty}{\|-(1-t)x+ty\|}.$ Consider the sets $
       R = \{ t\in [0, 1] : x \perp_{I}^{\epsilon}u_t\}, $ and $
      L = \{ t\in [0, 1] : x \perp_{I}^{\epsilon}v_t\}.$
   Clearly, $R, L \neq \emptyset,$ since $1 \in R\cap L.$ Next we prove that $R$ and $L $ are closed. Suppose $\{t_n\}_{n\in \mathbb{N}} \in R$ is such that $t_n \rightarrow t.$  Then $x \perp_{I}^{\epsilon} u_{t_n} . $ This implies that for every $ n \in \mathbb{N},$ we have $|\|x+ u_{t_n}\|^2 - \|x- u_{t_n}\|^2| \leq 4\epsilon.$    As $n \rightarrow \infty,$ $|\|x+ u_{t}\|^2 -\|x -  u_{t}\|^2| \leq 4\epsilon.$ Therefore, $x \perp_{I}^{\epsilon} u_t.$ This proves that $R$ is closed. Similarly, it can be shown that $L$ is also closed.\\
   Let $t_R = \inf R$ and let $t_L = \inf L.$ Then using Lemma \ref{monotonicity}, for any $t\in [0, 1]$ with $t\geq t_R,$ we get that 
    $ \|x - u_t\| \geq \|x - u_{t_R}\|$ and $   \|x + u_t\| \leq \|x + u_{t_R}\|.$
     This gives 
     $|\|x + u_t\|^2-  \|x - u_t\|^2| \leq |\|x + u_{t_R}\|^2 -\|x - u_{t_R}\|^2| \leq 4\epsilon. $ Therefore, $x \perp_{I}^{\epsilon} u_t. $ Similarly, one can show that for any $t \in [0,1]$ with $t \geq t_L,$ $x \perp_{I}^{\epsilon} v_t.$ Consider  $D = \Big \{ \frac{s u_{t_R} + (1-s)u_{t_L}}{\|s u_{t_R} + (1-s)u_{t_L}\|} : 0 \leq s \leq 1 \Big \}.$ Clearly, $D$ is connected. Moreover, it is easy to see that $ D \cup (-D) \subset A(x, \epsilon). $ Also, the implication $ A(x, \epsilon) \subset D \cup (-D) $ is trivial from the description of the sets $ R $ and $ L. $ This completes the proof of the theorem.

\end{proof}

 \subsection*{Declarations}
 Data sharing is not applicable to this article as no datasets were generated or analysed during the current study.
 Authors also declare that there is no financial or non-financial interests that are directly or indirectly related to the work submitted for publication.  On behalf of all authors, the corresponding author states that there is no conflict of interest.

\end{document}